\definecolor{lightblue}{rgb}{0.8,0.8,1}
\numberwithin{equation}{section}
\numberwithin{figure}{section}
\definecolor{vdarkred}{rgb}{0.7,0,0}
\declaretheoremstyle[
  spaceabove=\topsep,
  spacebelow=\topsep,
  headpunct=,
  numbered=no,
  postheadspace=1ex,
  headfont=\color{vdarkred}\normalfont\bfseries,
  bodyfont=\normalfont\itshape,
]{colored}
\declaretheoremstyle[
  spaceabove=\topsep,
  spacebelow=\topsep,
  headpunct=,
  numbered=no,
  postheadspace=1ex,
  headfont=\normalfont\bfseries,
  bodyfont=\normalfont\itshape,
]{italic}
\declaretheoremstyle[
  spaceabove=\topsep,
  spacebelow=\topsep,
  headpunct=,
  numbered=no,
  postheadspace=1ex,
  headfont=\normalfont\bfseries,
  bodyfont=\normalfont\upshape,
]{upright}
\declaretheorem[style=italic,name=Theorem,numbered=yes,numberwithin=section]{thm}
\declaretheorem[style=italic,name=Lemma,numbered=yes,numberlike=thm]{lem}
\declaretheorem[style=italic,name=Proposition,numbered=yes,numberlike=thm]{prop}
\declaretheorem[style=upright,name=Remark,numbered=yes,numberlike=thm]{rmk}
\declaretheorem[style=upright,name=Notation,numbered=yes,numberlike=thm]{notation}
\newcommand{\Aut}{\mathrm{Aut}}
\newcommand{\incl}[3][right]%
{%
\draw[<-,>=#1 hook] #2 to ($ #2!0.5!#3 $);
\draw[->,>=stealth'] ($ #2!0.5!#3 $) to #3;%
}
\newcommand{\inclusion}[5][right]%
{%
\draw[<-,>=#1 hook] #4 to ($ #4!0.5!#5 $) node[#2,font=\small]{#3};
\draw[->,>=stealth'] ($ #4!0.5!#5 $) to #5;%
}
\newcommand{\bR}{\mathbb{R}}
\renewcommand{\geq}{\geqslant}
\renewcommand{\leq}{\leqslant}
\renewcommand{\footnoterule}{%
  \kern -3pt
  \hrule width \textwidth height 0.4pt
  \kern 2.6pt
}
\definecolor{dred}{rgb}{0.7,0,0}
\definecolor{dgreen}{rgb}{0,0.5,0}
\begin{document}
\title{\LARGE\bfseries A note on representations of welded braid groups}
\date{\empty}

\author{Paolo Bellingeri\thanks{Normandie Univ., UNICAEN, CNRS, Laboratoire de Math\'ematiques Nicolas Oresme UMR CNRS~\textup{6139}, CS 14032, 14032 Cedex Cedex 5, France, \url{paolo.bellingeri@unicaen.fr}}, Arthur Souli\'e\thanks{University of Glasgow, School of Mathematics and Statistics, 132 University Pl, Glasgow G12 8TA, United-Kingdom, \url{Arthur.Soulie@glasgow.ac.uk}}}

\maketitle
{
\makeatletter
\renewcommand*{\BHFN@OldMakefntext}{}
\makeatother
\footnotetext{2010 \textit{Mathematics Subject Classification}: 20C07, 20F36, 57M07.}
\footnotetext{\textit{Key words and phrases}: Welded braid groups, Burau representation, Long-Moody construction.}
}
\begin{abstract}
In this note, we adapt the procedure of the Long-Moody procedure to construct linear representations of \emph{welded} braid groups. We exhibit the natural setting in this context and compute the first examples of representations we obtain thanks to this method. We take this way also the opportunity to review the few known linear representations of welded braid groups.
\end{abstract}

\section*{Introduction}

The theory of linear representations of the braid group $\mathbf{B}_{n}$ is a very large topic. One of the most famous representations is the Burau representation \cite{burau}, which
is non faithful for $n \ge 5$.
For a long period, it was an open problem whether $\mathbf{B}_{n}$ was linear until the independent works of Bigelow \cite{bigelowbraid}, Krammer \cite{KrammerLK} and Lawrence \cite{lawrencehomological} showing a faithful representation. Since the braid group $\mathbf{B}_{n}$
is an ubiquitous object in mathematics it is natural to ask whether other generalizations are linear too,
but except some cases (for instance Artin groups of type $B$ and $D$, braid groups of the sphere and of the projective plane) this question remains widely open.

In this work note we focus on  welded braid groups, which, as braid groups, admit several different definitions, 
for instance in terms of configuration spaces of (euclidean) circles, as automorphisms of free groups, or as \emph{tubes} in $\bR^4$.

The representation theory for these groups is just at the beginning: Burau representation extends, in terms of Fox derivatives and Magnus expansion,
to welded braid groups \cite{Bardakovthestructure} and few other results are known on representations arising from braided vector spaces \cite{KMRW} and on extensions of representations of $\mathbf{B}_{n}$ to particular subgroups of 
welded braid groups \cite{BKMR19}. 

The main idea of this work is to extend Long-Moody procedure  \cite{BirmanLongMoody, Long1, Long2, Long3, soulie1, soulie2}
to $\mathbf{wB}_{n}$: in \S \ref{S1} we recall briefly the interpretation of welded braid groups in terms of fundamental groups of configuration spaces of circles
and as automorphisms of free groups through Artin homomophism; this latter interpretation will be extended to other possible representations, extending Wada representations of the classical braid group $\mathbf{B}_{n}$. In \S \ref{sec:linrep} we recall and compare different 
Burau representations for $\mathbf{wB}_{n}$, the reducible one (Proposition \ref{def:burau}), the reduced one (Proposition \ref{def:redburau}), and the dual version
 (Proposition \ref{def:dualburau}). Then we introduce Tong-Yang-Ma representations of $\mathbf{wB}_{n}$ (Proposition \ref{def:TYM}) and extending the heuristic approach proposed in \cite{TYM} we show that Burau and Tong-Yang-Ma representations are the only representations allowing a certain diagram to commute (Proposition \ref{def:unique}). The main results are given in \S \ref{sec:LM}, where we adapt to welded braid groups the Long-Moody procedure for obtaining iterated linear representations. At the first step we obtain the Burau representation (Theorem \ref{thm:recbur}) as in the case of $\mathbf{B}_{n}$. Surprisingly
 at the second iteration we do not obtain any new information, since we get the tensor product of two Burau representations (Theorem \ref{thm:iter}), while in the case of $\mathbf{B}_{n}$ we recover this way Lawrence-Krammer representation (see \cite[Section 2.3.1]{soulie1}). This result can be also compared with the fact that the "trivial" extension of Bigelow representation to $\mathbf{wB}_{n}$ (associating to "braid" generators corresponding Bigelow matrices and to
 "permutation" generators the corresponding permutations matrices)
 is not well defined (see  \cite{Bardakovthestructure, KMRW}). Finally, once more contrarily to the case of $\mathbf{B}_{n}$, we show 
 in Theorem \ref{impossiblerecoverTYM} that it is not possible to recover the Tong-Yang-Ma representation 
 for $\mathbf{wB}_{n}$ by a Long-Moody construction and we conclude with some further possible directions on the study of linear representations for $\mathbf{wB}_{n}$. In a further paper, we intend to study the Long-Moody iteration on the Tong-Yang-Ma representation, but here we focus on the two above interesting facts.

\subsubsection*{Acknowledgements}

The authors were partially supported by the ANR Project AlMaRe (ANR-19-CE40-0001-01). The first author was also partially supported by the project ARTIQ (ERDF/RIN). The second author was also partially supported by the ANR Project ChroK (ANR-16-CE40-0003). The authors are deeply grateful to Emmanuel Wagner for his suggestions and ideas all along the writing of this paper. They also thank Markus Szymik for helpful discussions on that topic.


\paragraph{Notations and conventions.}
Throughout this work, for a group defined by a (finite) presentation, we take the convention to read from the right to the left for the group operation.
 
\section{State of the art of welded braid groups representations}\label{S1}

\subsection{Recollections on welded braid groups}\label{sec:recollectionswelded}

We refer the reader to \cite{damiani2017journey} for a complete and unified presentation of the various definitions of welded braid groups, which correspond to (unextended) loop braid groups in this reference. In the following we will use essentially the interpretation of welded braid groups in terms of automorphisms of free groups and fundamental groups of configuration spaces of circles.

In this work, we focus on a 3-dimensional analogue of $\mathbf{B}_{n}$: it is the fundamental group of all configurations of $n$
unlinked Euclidean circles lying on planes that are  parallel  to a fixed one
(called \emph{untwisted rings} in \cite{BrendleHatcher}). According to \cite{BrendleHatcher} 
we will denote by $\mathcal{UR}_n$ the space of configurations of $n$
unlinked Euclidean circles being all parallel to a fixed plane 
and by $UR_n$ its fundamental group (called \emph{group of rings} in \cite{BrendleHatcher}).
The group $UR_n$ is generated by $2$ types of moves (see Figure \ref{fig:moves}).

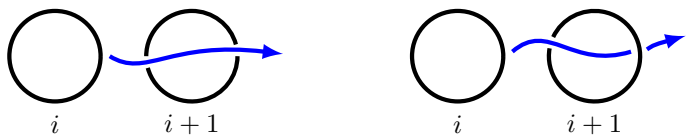
\begin{figure}[h]
\begin{center} \begin{tikzpicture}[scale=0.6]

 \draw[ultra thick]  (0:1) arc (0:360:1);

\begin{scope}[xshift=3cm]
  \draw[ultra thick]  (-165:1) arc (-165:0:1);
  \draw[ultra thick]  (-180:1) arc (-180:-345:1);
\end{scope}

\draw[->,>=latex,ultra thick,blue] (1.2,0.) .. controls (2,-0.5)  and (2.5,0.35) .. (5,0.05);

 \node at (0,-1.5) {$i$}; 
 \node at (3,-1.5) {$i+1$}; 
\end{tikzpicture} \qquad \qquad
 \begin{tikzpicture}[scale=0.6]

 \draw[ultra thick]  (0:1) arc (0:360:1);

\begin{scope}[xshift=3cm]
  \draw[ultra thick]  (-190:1) arc (-190:155:1);
\end{scope}

\draw[ultra thick,blue] (1.2,0.1) .. controls (2,0.8)  and (2.5,-0.3) .. (3.8,0.1);
\draw[->,>=latex,ultra thick,blue] (4.15,0.15) .. controls (4.3,0.25) .. (5,0.45);

 \node at (0,-1.5) {$i$}; 
 \node at (3,-1.5) {$i+1$}; 
\end{tikzpicture} \qquad
 \caption{The moves $\tau_i$ and $\sigma_i$.
 \label{fig:moves}}
 \end{center}
\end{figure}

The move $\tau_i$ is the path permuting the $i$-th and the $i+1$-th circles by passing over (or around)
while $\sigma_i$ permutes them by passing the $i$-th circle through the $i+1$-th (let us remark that our notation is different from the one of \cite{BrendleHatcher}, where $\tau_i$ was denoted by $\sigma_i$ and $\sigma_i$ by $\rho_i$; here we change the notation because 
$\sigma_i$'s generate a subgroup isomorphic to $\mathbf{B}_{n}$).

The fundamental group $UR_n$ is here denoted by $\mathbf{wB}_n$, and it is called 
welded braid group. 
Note that the convention of reading from the right to the left for the group operation is coherent with the interpretation of $\mathbf{wB}_{n}$ as the fundamental group of the space of configurations of $n$ unlinked Euclidean circles being all parallel to a fixed plane: this convention corresponds to the composition of morphisms. We abuse the notation throughout this work, identifying $\lambda\circ\lambda'=\lambda\lambda'$ for all elements $\lambda$ and $\lambda'$ of $\mathbf{wB}_{n}$.

In \cite{BrendleHatcher} was proven that the welded braid group on $n$ generators $\mathbf{wB}_{n}$ 
admits a presentation with generators $\left\{ \sigma_{i},\tau_{i}\mid i\in\left\{ 1,\ldots,n-1\right\} \right\}$
together with relations:
\begin{equation}\label{eq:relations}
\begin{cases}
\sigma_{i}\sigma_{k}=\sigma_{k}\sigma_{i} & \textrm{if \ensuremath{\mid i-k\mid\geq2},}\\
\sigma_{i}\sigma_{i+1}\sigma_{i}=\sigma_{i+1}\sigma_{i}\sigma_{i+1} & \textrm{if \ensuremath{i\in\left\{ 1,\ldots,n-2\right\} },}\\
\tau_{i}\tau_{k}=\tau_{k}\tau_{i} & \textrm{if \ensuremath{\mid i-k\mid\geq2},}\\
\tau_{i}\tau_{i+1}\tau_{i}=\tau_{i+1}\tau_{i}\tau_{i+1} & \textrm{if \ensuremath{i\in\left\{ 1,\ldots,n-2\right\} },}\\
\tau_{i}^{2}=1 & \textrm{if \ensuremath{i\in\left\{ 1,\ldots,n-1\right\} },}\\
\sigma_{i}\tau_{k}=\tau_{k}\sigma_{i} & \textrm{if \ensuremath{\mid i-k\mid\geq2},}\\
\tau_{i}\sigma_{i+1}\sigma_{i}=\sigma_{i+1}\sigma_{i}\tau_{i+1} & \textrm{if \ensuremath{i\in\left\{ 1,\ldots,n-2\right\} },}\\
\sigma_{i}\tau_{i+1}\tau_{i}=\tau_{i+1}\tau_{i}\sigma_{i+1} & \textrm{if \ensuremath{i\in\left\{ 1,\ldots,n-2\right\} }.}
\end{cases}
\end{equation}

The map $\mathcal{UR}_{n} \to \mathcal{UR}_{n+1}$ which adds a circle on the left induces an (injective) homomorphism
from $\mathbf{wB}_{n}$ to $\mathbf{wB}_{n+1}$. 

Let $\mathcal{PUR}_n$ be the space of configurations of $n$
ordered unlinked Euclidean circles being all parallel to a fixed plane; the fundamental group of 
 $\mathcal{PUR}_n$ is usually denoted by $ \mathbf{wP}_{n}$ and called welded pure braid group.

The map $\mathcal{PUR}_{n+1} \to
 \mathcal{PUR}_{n}$ which forgets the first circle is a fibration and the long exact sequence in homotopy provides the following (splitting) sequence:
 
$$\xymatrix{1\ar@{->}[r] & \mathbf{D}_{n}\ar@{->}[r] & \mathbf{wP}_{n+1}\ar@{->}[r] & \mathbf{wP}_{n}\ar@{->}[r] & 1,}
$$

where $\mathbf{D}_{n}$ consists of configurations with $n$ circles in a fixed position and the first circle varying \cite{BrendleHatcher}.

Let now $\mathcal{UR}_{1,n}$ be the orbit space 
 $\mathcal{PUR}_{n+1}/\mathfrak{S}_{n}$ where the symmetric group $\mathfrak{S}_{n}$ acts by permutation on last $n$ circles. We will denote 
 by $\mathbf{wB}_{1,n}$ its fundamental group.
 As in previous case we have a splitting sequence:

$$\xymatrix{1\ar@{->}[r] & \mathbf{D}_{n}\ar@{->}[r] & \mathbf{wB}_{1,n}\ar@{->}[r] & \mathbf{wB}_{n}\ar@{->}[r] & 1.}
$$

\subsection{Artin homomorphism}\label{Artin homomorphism}
In the following we will use another interpretation of $\mathbf{wB}_{n}$, this time in terms of automorphisms of free groups.
Let $\mathbf{F}_{n}$ be the free group on $n$ generators $\left\langle x_{1},\ldots,x_{n}\right\rangle $. We call {\em Artin homomorphism} the map $a_n\colon \mathbf{wB}_{n}\rightarrow\Aut\left(\mathbf{F}_{n}\right)$ defined as follows:
\begin{eqnarray*}
\sigma_{i} & \longmapsto & \begin{cases}
x_{i}\longmapsto x_{i+1}\\
x_{i+1}\longmapsto x_{i+1}^{-1}x_{i}x_{i+1}\\
x_{j}\longmapsto x_{j} \quad j \notin \{i, i+1\}
\end{cases}\\
\tau_{i} & \longmapsto & \begin{cases}
x_{i}\longmapsto x_{i+1}\\
x_{i+1}\longmapsto x_{i}\\
x_{j}\longmapsto x_{j} \quad j \notin \{i, i+1\}
\end{cases}
\end{eqnarray*}

This map is well defined: the relations involving only generators $\left\{ \sigma_{i}\mid i\in\left\{ 1,\ldots,n-1\right\} \right\}$
are verified since it is the usual Artin representation of $\mathbf{B}_{n}$ in $\Aut\left(\mathbf{F}_{n}\right)$. The same remark holds for 
relations involving only generators $\left\{ \tau_{i}\mid i\in\left\{ 1,\ldots,n-1\right\} \right\}$ (permutation automorphisms),
therefore the only relations that we have to verify are relations involving generators of both type $\sigma_i$ and $\tau_i$.
We check here relations $\tau_{i}\sigma_{i+1}\sigma_{i}=\sigma_{i+1}\sigma_{i}\tau_{i+1}$
and $\tau_{i+1}\tau_{i}\sigma_{i+1}=\sigma_{i}\tau_{i+1}\tau_{i}$; commutation relations are
evidently verified (see also \cite{FennRimanyiRourke}).

$$a_{n}\left(\sigma_{i+1}\sigma_{i}\tau_{i+1}\right)=\begin{cases}
x_{i}\longmapsto x_{i+2}\\
x_{i+1}\longmapsto x_{i+2}^{-1}x_{i+1}x_{i+2}\\
x_{i+2}\longmapsto x_{i+1}^{-1}x_{i}x_{i+1}\\
x_j \longmapsto x_j \quad j \notin \{i, i+1, i+2\}
\end{cases}=a_{n}\left(\tau_{i}\sigma_{i+1}\sigma_{i}\right)$$

$$a_{n}\left(\sigma_{i}\tau_{i+1}\tau_{i}\right)=\begin{cases}
x_{i}\longmapsto x_{i+2}\\
x_{i+1}\longmapsto x_{i+1}\\
x_{i+2}\longmapsto x_{i+1}^{-1}x_{i}x_{i+1}\\
x_j \longmapsto x_j \quad j \notin \{i, i+1, i+2\}
\end{cases}=a_{n}\left(\tau_{i+1}\tau_{i}\sigma_{i+1}\right)$$

Geometrically, we are associating to any 
generator of $\mathbf{wB}_{n} $ the corresponding action on the fundamental group of the ball $B^3$ less $n$ trivial circles,
which is a free group on $n$ generators. We refer to \cite{damiani2017journey} for a rigorous proof of this construction and of the fact that 
Artin homomorphism is injective. Moreover,
the image of the group $\mathbf{wB}_{n}$ in $\Aut\left(\mathbf{F}_{n}\right)$ is the subgroup of those automorphisms of $\mathbf{F}_{n}$ that send each generator of $\mathbf{F}_{n}$ to a conjugate of some generator \cite{FennRimanyiRourke}.

Notice also that the homomorphism
from $\mathbf{wB}_{n}$ to $\mathbf{wB}_{n+1}$ previously defined ("add a circle on the left")
becomes here the restriction of the map $id_{1}\ast-:\textrm{Aut}\left(\mathbf{F}_{n}\right)\hookrightarrow \textrm{Aut}\left(\mathbf{F}_{n+1}\right)$. 
 In other words, $id_{1}\ast\sigma_{i}=\sigma_{i+1}$ and $id_{1}\ast\tau_{i}=\tau_{i+1}$.

The group $\mathbf{D}_{n}$ previously defined is isomorphic to the group generated by automorphisms

$\left\{ \epsilon_{i,1} , \epsilon_{1,i} \mid i\in\left\{ 2,\ldots,n+1\right\} \right\}$ \cite{Bardakovconjugatingautomorphisms},
where

\begin{eqnarray*}
\epsilon_{1,i} & \longmapsto & \begin{cases}
x_{i}\longmapsto x_{1}^{-1} x_{i} x_{1}\\
x_{j}\longmapsto x_{j} \quad j \not= i
\end{cases}\\
\epsilon_{i,1} & \longmapsto & \begin{cases}
x_{1}\longmapsto x_{i}^{-1} x_{1} x_{i}\\
x_{j}\longmapsto x_{j} \quad j \not= 1
\end{cases}
\end{eqnarray*}

Note that $\left\{ \epsilon_{i,1} \right\}$ generate a free group of order $n$ and $\left\{ \epsilon_{1,i} \right\}$ a free abelian group of rank $n$ \cite{Bardakovconjugatingautomorphisms}, but $\mathbf{D}_{n}$ is not finitely presented when $n\ge 3$ \cite{Pe}. 
\paragraph{Alternative to Artin homomorphism.}\label{sec:Wada}
Wada \cite{wada1992group} found several \emph{local} representations of $\mathbf{B}_{n}$ in $\Aut\left(\mathbf{F}_{n}\right)$ of the following form:
any generator (and therefore its inverse) of $\mathbf{B}_{n}$ acts trivially on generators of $\mathbf{F}_{n}$ except a pair of generators:
\begin{eqnarray*} 
\sigma_i \cdot x_i=u(x_i, x_{i+1}) \, ,\\
\sigma_i \cdot x_{i+1}=v(x_i, x_{i+1}) \, , \\
\sigma_i \cdot x_j= x_j &\qquad j \not= i, i+1 \, ,
\end{eqnarray*}
where $u$ and $v$ are now words in the generators $x_i, x_{i+1}$, with $\langle x_i, x_{i+1} \rangle \simeq \mathbf{F}_{2}$. Wada found seven families of representations of local type (see Section 4 of \cite{BBrep} for a short survey on these representations), up to the dual equivalence (corresponding to the involution of $\mathbf{F}_{n}$ given by simultaneously replacing all $x_i$ with $x_i^{-1}$) and inverse equivalence (derived from the involution of $\mathbf{B}_{n}$ defined by sending $\sigma_{i}$ to $\sigma_{i}^{-1}$):\footnote{In \cite{BBrep} and \cite{wada1992group}, the action described here is the one for $\sigma_i^{-1}$. However, we choose to write the inverse symmetry equivalent here to be consistent with respect to the above Artin representation $a_n$
and with the fact that we compose elements from the right to the left.}
 
\begin{itemize}
\item Type 1, $\psi_1$: $u(x_i, x_{i+1})= x_i$ and $v(x_i, x_{i+1})= x_{i+1} $;
\item Type 2, $\psi_2$: $u(x_i, x_{i+1})= x_{i+1}^{-1}$ and $v(x_i, x_{i+1})= x_i$;
\item Type 3, $\psi_3$: $u(x_i, x_{i+1})= x_{i+1}^{-1}$ and $v(x_i, x_{i+1})= x_i^{-1} $;
\item Type 4, $\psi_{4,h}$: $u(x_i, x_{i+1})= x_{i+1}$ and $v(x_i, x_{i+1})= x_{i+1}^{-h} x_{i} x_{i+1}^{h}$;
\item Type 5, $\psi_5$: $u(x_i, x_{i+1})= x_{i+1} $ and $v(x_i, x_{i+1})= x_{i+1}x_{i}^{-1} x_{i+1}$;
\item Type 6, $\psi_6$: $u(x_i, x_{i+1})= x_{i+1}^{-1}$ and $v(x_i, x_{i+1})= x_{i+1} x_{i} x_{i+1}$;
\item Type 7, $\psi_7$: $u(x_i, x_{i+1})= x_{i}x_{i+1}^{-1}x_{i}^{-1}$ and $v(x_i, x_{i+1})= x_{i}x_{i+1}^{2}$.
\end{itemize}

We can try to extend Wada representations to welded braid groups
associating to any generator $\sigma_i$ the Wada representation of type $k$ and to any generator $\tau_i$
the corresponding permutation automorphism. We will say that a Wada representation extends to $\mathbf{wB}_{n}$
if the map defined as above on generators of $\mathbf{wB}_{n}$ is actually a homomorphism.
Type 1 does not extend, while Type 2 and Type 3 extend to $\mathbf{wB}_{n}$ but these extensions are not interesting since the image of the group generated by $\sigma_i$'s
in $\Aut\left(\mathbf{F}_{n}\right)$ is trivial or isomorphic to $\mathfrak{S}_{n}$. Between the other four representations it is easy to check (see \cite{BardakovBellingeri1}) that only Type 4 and Type 5 representations extend this way to a representation; let denote these two representations respectively by
$\chi_1$ and $\chi_2$. These two representations are not equivalent meaning that
 there are no automorphisms
$\phi \in \Aut\left(\mathbf{F}_{n}\right)$ and $\mu : \mathbf{wB}_{n} \to \mathbf{wB}_{n}$ such that
$$
\phi^{-1} \, \chi_1(\beta ) \, \phi = \chi_2(\mu(\beta )) ,
$$
for any $\beta \in \mathbf{wB}_{n}$ (see \cite{BardakovBellingeri1}).

\subsection{Known linear representations for welded braid groups}\label{sec:linrep}

We present in this section the linear representations of welded braid groups which can be straightforwardly derived from those of braid groups. In particular, we extend heuristic procedure on matrices. For the remainder of \S\ref{sec:linrep}, fix a natural number $n\geq2$.

Beforehand we indicate that we study here in the "specific" representations of welded braid groups, not those which factor through symmetric groups. Namely, recall that sending both $\sigma_{i}$ and $\tau_{i}$ in the transposition $(i,i+1)$ for $i\in\left\{ 1,\ldots,n-1\right\}$
we obtain the short exact sequence:
$$\xymatrix{1\ar@{->}[r] & \mathbf{wP}_{n}\ar@{->}[r] & \mathbf{wB}_{n}\ar@{->}[r] & \mathfrak{S}_{n}\ar@{->}[r] & 1.}$$
Therefore, all representations of the symmetric group $\mathfrak{S}_{n}$ lift to $\mathbf{wB}_{n}$, but we will not consider them since we loose too many informations on welded braid groups.

\subsubsection{The Burau representations}

The family of \emph{Burau} representations for braid groups were first introduced in \cite{burau} and has been intensively studied. We refer the reader to \cite{kasselturaev} for a complete presentation.

These representations can be extended to welded braid groups in the following way.

\begin{prop}\label{def:burau}
The following assignment defines a representation $Bur:\mathbf{wB}_{n}\rightarrow GL_{n}\left(\mathbb{Z}[t^{\pm1}]\right)$ called the Burau representation of the welded braid group $\mathbf{wB}_{n}$:
$$\sigma_{i}\mapsto Id_{i-1}\oplus\left[\begin{array}{cc}
0 & t\\
1 & 1-t
\end{array}\right]\oplus Id_{n-i-1}\,\textrm{ and }\,\tau_{i}\mapsto Id_{i-1}\oplus\left[\begin{array}{cc}
0 & 1\\
1 & 0
\end{array}\right]\oplus Id_{n-i-1}$$
for all natural numbers $i\in\left\{ 1,\ldots,n-1\right\} $.
\end{prop}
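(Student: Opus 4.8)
The plan is to verify directly that the proposed assignment respects all the defining relations \eqref{eq:relations} of $\mathbf{wB}_{n}$. Since we are defining a homomorphism on a group given by generators and relations, it suffices to check that the matrices assigned to $\sigma_i$ and $\tau_i$ satisfy each relation. The computations are local, involving at most three consecutive strands, so one can reduce to small matrix identities ($2\times 2$, $3\times 3$) and invoke the block-sum structure to handle the general case.

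**First** I would dispose of the relations internal to each family. The $\sigma_i$-only relations (far commutation and the braid relation $\sigma_i\sigma_{i+1}\sigma_i=\sigma_{i+1}\sigma_i\sigma_{i+1}$) hold because this is precisely the classical (unreduced) Burau representation of $\mathbf{B}_{n}$, which is already known to be well defined. The $\tau_i$-only relations are the defining relations of $\mathfrak{S}_n$ (far commutation, the braid relation, and $\tau_i^2=1$), and since $\tau_i\mapsto Id_{i-1}\oplus\left[\begin{smallmatrix}0&1\\1&0\end{smallmatrix}\right]\oplus Id_{n-i-1}$ is the standard permutation representation restricted to adjacent transpositions, these are immediate; in particular the involved matrix squares to the identity. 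The far-commutation mixed relation $\sigma_i\tau_k=\tau_k\sigma_i$ for $|i-k|\geq 2$ is clear since the corresponding blocks act on disjoint coordinates.

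**The only substantive step** is to verify the two mixed braid-type relations
\[
\tau_{i}\sigma_{i+1}\sigma_{i}=\sigma_{i+1}\sigma_{i}\tau_{i+1}
\qquad\text{and}\qquad
\sigma_{i}\tau_{i+1}\tau_{i}=\tau_{i+1}\tau_{i}\sigma_{i+1},
\]
which each involve three consecutive strands. By the block structure it is enough to treat the case $n=3$, $i=1$, so that all matrices lie in $GL_3$ and act on the coordinates indexed by $\{i,i+1,i+2\}$. I would write down the two relevant $3\times 3$ products on each side, using $B=\left[\begin{smallmatrix}0&t\\1&1-t\end{smallmatrix}\right]$ and $P=\left[\begin{smallmatrix}0&1\\1&0\end{smallmatrix}\right]$ embedded in the appropriate positions, and check that the two sides agree as matrices over $\mathbb{Z}[t^{\pm 1}]$.

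**The main obstacle** is purely computational bookkeeping: keeping the placement of the $2\times 2$ blocks consistent with the right-to-left composition convention adopted in the paper, since a sign or ordering slip would spoil the matching. I expect no conceptual difficulty — the verification mirrors the check already performed for the Artin homomorphism $a_n$ in \S\ref{Artin homomorphism}, and indeed one could alternatively observe that $Bur$ is obtained from $a_n$ by the Fox-derivative/abelianization procedure (Magnus-type linearization) sending each $x_j\mapsto$ a power of $t$, so that well-definedness of $Bur$ follows formally from well-definedness of $a_n$ together with naturality of the Fox calculus. I would likely present the direct matrix check as the primary argument and remark on the Fox-derivative viewpoint as the conceptual reason the relations hold.
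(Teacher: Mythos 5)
Your proposal is correct and follows essentially the same route as the paper: reduce to the known Burau representation of $\mathbf{B}_{n}$ for the $\sigma_i$-relations and the permutation representation of $\mathfrak{S}_n$ for the $\tau_i$-relations, then verify the mixed relations by direct (local, $3\times 3$) matrix computation. The additional remark on the Fox-derivative viewpoint is a nice conceptual supplement but is not needed for, nor used in, the paper's argument.
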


\begin{proof}
The matrices for $\sigma_{i}$ and $\tau_{i}$ define representations of braid groups and symmetric groups respectively. It follows from the relations of $\mathbf{wB}_{n}$ of \eqref{eq:relations} that we just have to check the mixed relations between the braid and symmetric generators: this is done by straightforward computations.
\end{proof}

\paragraph{Reduced version.}
As for the case of braid groups, the Burau representation is reducible and we can define an irreducible version.

\begin{prop}\label{def:redburau}
The following assignment defines a representation $\overline{Bur}:\mathbf{wB}_{n}\rightarrow GL_{n-1}\left(\mathbb{Z}[t^{\pm1}]\right)$ called the reduced Burau representation of the welded braid group $\mathbf{wB}_{n}$:
$$\sigma_{1}\mapsto\left[\begin{array}{cc}
-t & t\\
0 & 1
\end{array}\right]\oplus Id_{n-3}\,\textrm{ and }\,\tau_{1}\mapsto\left[\begin{array}{cc}
-1 & 1\\
0 & 1
\end{array}\right]\oplus Id_{n-3};$$
$$\sigma_{n-1}\mapsto Id_{n-3}\oplus\left[\begin{array}{cc}
1 & 0\\
1 & -t
\end{array}\right]\,\textrm{ and }\,\tau_{n-1}\mapsto Id_{n-3}\oplus\left[\begin{array}{cc}
1 & 0\\
1 & -1
\end{array}\right];$$
$$\sigma_{i}\mapsto Id_{i-2}\oplus\left[\begin{array}{ccc}
1 & 0 & 0\\
t & -t & 1\\
0 & 0 & 1
\end{array}\right]\oplus Id_{n-i-2}\,\textrm{ and }\,\tau_{i}\mapsto Id_{i-2}\oplus\left[\begin{array}{ccc}
1 & 0 & 0\\
1 & -1 & 1\\
0 & 0 & 1
\end{array}\right]\oplus Id_{n-i-2}$$
for all natural numbers $i\in\left\{ 2,\ldots,n-2\right\} $. Moreover we have a short exact sequence of $\mathbf{wB}_{n}$-representations
\begin{equation}\label{eq:sesburau}
\xymatrix{0\ar@{->}[r] & \overline{Bur}\ar@{->}[r] & Bur\ar@{->}[r] & \mathbb{Z}\left[t^{\pm1}\right]\ar@{->}[r] & 0}
\end{equation}
where $\ensuremath{\mathbf{wB}_{n}}$ acts trivially on the kernel $\mathbb{Z}\left[t^{\pm1}\right]$.
\end{prop}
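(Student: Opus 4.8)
The plan is to realise $\overline{Bur}$ as a subrepresentation of the already-established representation $Bur$, rather than to check the relations of $\mathbf{wB}_n$ by hand on the displayed $(n-1)\times(n-1)$ matrices. This approach simultaneously proves that the assignment is well defined and produces the short exact sequence \eqref{eq:sesburau}, exactly as in the classical braid group case where the reduced Burau representation is the augmentation kernel.

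Let $e_1,\dots,e_n$ denote the standard basis of the free module $\mathbb{Z}[t^{\pm1}]^n$ carrying $Bur$. First I would observe that the row vector $\varepsilon=(1,\dots,1)$ is a common left eigenvector, with eigenvalue $1$, for every matrix $Bur(\sigma_i)$ and $Bur(\tau_i)$. This amounts to checking that each generator matrix has all of its column sums equal to $1$, which is visible block by block: the columns of $\left[\begin{smallmatrix} 0 & t \\ 1 & 1-t\end{smallmatrix}\right]$ sum to $1$, those of $\left[\begin{smallmatrix} 0 & 1 \\ 1 & 0\end{smallmatrix}\right]$ sum to $1$, and the remaining diagonal entries each contribute $1$. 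Consequently the augmentation $\pi\colon\mathbb{Z}[t^{\pm1}]^n\to\mathbb{Z}[t^{\pm1}]$, $v\mapsto\sum_i v_i$, satisfies $\pi\circ Bur(g)=\pi$ for every generator $g$, so $\pi$ is a morphism of $\mathbf{wB}_n$-representations onto the trivial module $\mathbb{Z}[t^{\pm1}]$.

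Next I would set $W=\ker\pi=\{v:\sum_i v_i=0\}$. Since $\pi$ is equivariant, $W$ is a $\mathbf{wB}_n$-invariant submodule; it is free of rank $n-1$ with basis $f_i=e_i-e_{i+1}$, $i\in\{1,\dots,n-1\}$. As $\pi$ is surjective (e.g.\ $\pi(e_1)=1$), this yields precisely the exact sequence \eqref{eq:sesburau}, with $\overline{Bur}$ the action on $W$ and trivial $\mathbf{wB}_n$-action on the quotient $\mathbb{Z}[t^{\pm1}]$. Because $W$ is a subrepresentation of $Bur$, the restricted assignment is automatically a homomorphism, so none of the relations of \eqref{eq:relations} must be verified separately for $\overline{Bur}$.

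It remains only to identify the restricted action in the basis $(f_i)$ with the displayed matrices; this is the single computational step. Using $Bur(\sigma_i)\colon e_i\mapsto e_{i+1}$, $e_{i+1}\mapsto t\,e_i+(1-t)e_{i+1}$ (and $e_j$ fixed for $j\neq i,i+1$), one finds $\sigma_i\cdot f_i=-t\,f_i$, together with $\sigma_i\cdot f_{i-1}=f_{i-1}+f_i$ and $\sigma_i\cdot f_{i+1}=t\,f_i+f_{i+1}$ whenever those neighbours exist, and analogously for $\tau_i$; reading these relations off gives the $2\times2$ boundary blocks for $i=1,n-1$ and the $3\times3$ interior blocks for $2\le i\le n-2$. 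I expect this change-of-basis bookkeeping—treating the boundary generators $\sigma_1,\sigma_{n-1},\tau_1,\tau_{n-1}$ separately from the interior ones—to be the main, though entirely routine, obstacle; the conceptual content, namely well-definedness and the exact sequence, is already secured by the equivariance of $\pi$.
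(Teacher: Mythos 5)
Your strategy is essentially the one the paper itself uses for the second half of the statement: the paper conjugates $Bur$ by the lower-triangular all-ones matrix $r_{n}$, and since the columns of $r_{n}^{-1}$ are exactly $e_{j}-e_{j+1}$ (for $j<n$) and $e_{n}$, its block-triangular computation is precisely your observation that the augmentation kernel $W$ with basis $f_{j}=e_{j}-e_{j+1}$ is invariant with trivial one-dimensional quotient. Your refinement --- deducing well-definedness of $\overline{Bur}$ from the invariance of $W$ inside the already-established representation of Proposition \ref{def:burau}, instead of first re-checking the relations \eqref{eq:relations} on the reduced matrices as the paper does --- is logically sound and is a genuine economy.

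However, the one step you declare ``entirely routine'' and do not carry out is exactly where the proof of the statement \emph{as printed} breaks down. Your (correct) formulas $\sigma_{i}\cdot f_{i-1}=f_{i-1}+f_{i}$, $\sigma_{i}\cdot f_{i}=-tf_{i}$, $\sigma_{i}\cdot f_{i+1}=tf_{i}+f_{i+1}$ give, for $2\leq i\leq n-2$, the block $\left[\begin{smallmatrix}1&0&0\\1&-t&t\\0&0&1\end{smallmatrix}\right]$, whereas the proposition displays $\left[\begin{smallmatrix}1&0&0\\t&-t&1\\0&0&1\end{smallmatrix}\right]$; the boundary blocks for $i=1,n-1$ and all the $\tau_{i}$ blocks (where $t=1$) do agree with yours. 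The mismatch cannot be absorbed by a further diagonal change of basis compatible with the displayed boundary blocks, and in fact the displayed family of matrices fails the braid relation $\sigma_{1}\sigma_{2}\sigma_{1}=\sigma_{2}\sigma_{1}\sigma_{2}$ (take $n=4$ and multiply out: the $(1,1)$ entries of the two products are $t^{2}-t^{3}$ and $t^{2}-t$). So your computation is right and the displayed interior $\sigma_{i}$-blocks are misprinted; but as written your proof asserts an identification that is false for the statement verbatim. You need to actually perform the read-off, record the matrices you obtain, and note that what you have proved is a corrected version of the proposition rather than the stated one.
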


\begin{proof}
The matrices for $\sigma_{i}$ define the reduced Burau representations of braid groups (see \cite[Section 3.3]{kasselturaev}) and those for $\tau_{i}$ define the standard representation of symmetric groups. Again the compatibility with respect to the mixed relations of $\mathbf{wB}_{n}$ are checked by straightforward computations.

Let $r_{n}$ be the $n\times n$-matrix with coefficients $r_{i,j}=1$ if $j\leq i$ and $r_{i,j}=0$ otherwise. Then
$$r_{n}\circ Bur\left(\sigma_{i}\right)\circ r_{n}^{-1}=\left[\begin{array}{cc}
\overline{Bur}\left(\sigma_{i}\right) & C_{i}\\
0 & 1
\end{array}\right]$$
where $C_{i}=\left[\begin{array}{ccccc}0 & \cdots & 0 & \delta_{i,n-1} & 1\end{array}\right]^{T}$ (where the label $^{T}$ means the transpose matrix) and $\delta_{i,n-1}$ denotes the Kronecker delta: this defines the short exact sequence \eqref{eq:sesburau}.
\end{proof}

\begin{rmk}
Since Burau representation for $\mathbf{B}_{n}$ is not faithful when $n\ge 5$, it follows that also its extension is not faithful;
indeed, the Burau representation for $\mathbf{wB}_{n}$ has non trivial kernel even for $n=2$ (see Lemma 6 of \cite{Bardakovextendingautomorphisms}).
\end{rmk}

\paragraph{Dual versions.}
Actually, there are two non-equivalent versions of the Burau representation for braid groups: the one which matrices are given in Proposition \ref{def:burau}, and its dual which matrices are the transpose of the inverse of these matrices. This dual version also lifts to the welded braid group:

\begin{prop}\label{def:dualburau}
Assigning $Bur^{*}\left(\sigma_{i}\right)=Bur^{T}\left(\sigma_{i}^{-1}\right)$ and $Bur^{*}\left(\tau_{i}\right)=Bur^{T}\left(\tau_{i}^{-1}\right)$ for all natural numbers $i\in\left\{ 1,\ldots,n-1\right\} $ defines a representation $Bur^{*}:\mathbf{wB}_{n}\rightarrow GL_{n}\left(\mathbb{Z}[t^{\pm1}]\right)$ called the dual Burau representation.

It induces a dual reduced Burau representation $\overline{Bur}^{*}:\mathbf{wB}_{n}\rightarrow GL_{n-1}\left(\mathbb{Z}[t^{\pm1}]\right)$ which is defined by $\overline{Bur}^{*}\left(\sigma_{i}\right)=\overline{Bur}^{T}\left(\sigma_{i}^{-1}\right)$ and $\overline{Bur}^{*}\left(\tau_{i}\right)=\overline{Bur}^{T}\left(\tau_{i}^{-1}\right)$ for all natural numbers $i\in\left\{ 1,\ldots,n-1\right\} $, and defines the short exact sequence of $\mathbf{wB}_{n}$-representations
\begin{equation}\label{eq:dualsesburau}
\xymatrix{0\ar@{->}[r] & \mathbb{Z}\left[t^{\pm1}\right]\ar@{->}[r] & Bur^{*}\ar@{->}[r] & \overline{Bur}^{*}\ar@{->}[r] & 0}
\end{equation}
where $\ensuremath{\mathbf{wB}_{n}}$ acts trivially on the kernel $\mathbb{Z}\left[t^{\pm1}\right]$.
\end{prop}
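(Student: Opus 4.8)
The plan is to recognise $Bur^{*}$ and $\overline{Bur}^{*}$ as the contragredient (dual) representations of $Bur$ and $\overline{Bur}$, and then to obtain the exact sequence \eqref{eq:dualsesburau} by applying the dualising functor to \eqref{eq:sesburau}. The whole argument is formal once one records the right general facts, so I expect no serious obstacle; the only point requiring genuine care is the exactness of dualisation, which I address below.

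First I would record the elementary fact that, for any group $G$ and any representation $\rho\colon G\to GL_{m}(R)$ over a commutative ring $R$, the assignment $\rho^{*}(g)=\rho(g^{-1})^{T}$ defines a representation $\rho^{*}\colon G\to GL_{m}(R)$. Indeed, using that $\rho$ is a homomorphism and that both inversion and transposition reverse the order of products, one has $\rho^{*}(gh)=\rho(h^{-1}g^{-1})^{T}=(\rho(h^{-1})\rho(g^{-1}))^{T}=\rho(g^{-1})^{T}\rho(h^{-1})^{T}=\rho^{*}(g)\rho^{*}(h)$, so the two order-reversals cancel. Applying this with $\rho=Bur$ (Proposition \ref{def:burau}) and with $\rho=\overline{Bur}$ (Proposition \ref{def:redburau}) immediately shows that $Bur^{*}$ and $\overline{Bur}^{*}$ are well-defined representations valued in $GL_{n}(\mathbb{Z}[t^{\pm1}])$ and $GL_{n-1}(\mathbb{Z}[t^{\pm1}])$ respectively.

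Next I would deduce the short exact sequence \eqref{eq:dualsesburau} from \eqref{eq:sesburau}. The underlying modules of $Bur$, of $\overline{Bur}$ and of the trivial representation are all free over $\mathbb{Z}[t^{\pm1}]$, so the module-level sequence underlying \eqref{eq:sesburau} splits and the dualising functor $V\mapsto V^{*}=\mathrm{Hom}_{\mathbb{Z}[t^{\pm1}]}(V,\mathbb{Z}[t^{\pm1}])$, which on matrices is precisely transpose-inverse, is exact on it. Being contravariant, it reverses the arrows of \eqref{eq:sesburau} and replaces each term by its dual, giving $0\to\mathbb{Z}[t^{\pm1}]^{*}\to Bur^{*}\to\overline{Bur}^{*}\to0$. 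Since $\mathbf{wB}_{n}$ acts trivially on the kernel of \eqref{eq:sesburau} and the transpose-inverse of the $1\times1$ identity is the identity, the dual of the trivial representation is again the trivial representation $\mathbb{Z}[t^{\pm1}]$; this yields exactly \eqref{eq:dualsesburau}, with trivial action on the kernel.

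If one prefers an explicit verification, the same conclusion falls out of the triangular form already computed in the proof of Proposition \ref{def:redburau}. Conjugating $Bur(\sigma_{i}^{-1})$, and likewise $Bur(\tau_{i}^{-1})$, by $r_{n}$ produces a block upper-triangular matrix with $\overline{Bur}(\sigma_{i}^{-1})$ in the top-left corner and $1$ in the bottom-right; transposing and using $(r_{n}Ar_{n}^{-1})^{T}=(r_{n}^{T})^{-1}A^{T}r_{n}^{T}$ turns this into a block lower-triangular conjugate of $Bur^{*}(\sigma_{i})$ having $\overline{Bur}^{*}(\sigma_{i})=\overline{Bur}(\sigma_{i}^{-1})^{T}$ in the top-left block and $1$ in the bottom-right. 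The last basis vector then spans a trivial sub-representation $\mathbb{Z}[t^{\pm1}]$ with quotient $\overline{Bur}^{*}$, which is the content of \eqref{eq:dualsesburau}.
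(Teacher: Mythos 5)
Your proposal is correct and follows essentially the same route as the paper: the contragredient observation (inverse and transpose each reverse the order of products, so their composite preserves it) shows $Bur^{*}$ and $\overline{Bur}^{*}$ are representations, and the sequence \eqref{eq:dualsesburau} is obtained by dualising \eqref{eq:sesburau}, which reverses the arrows and swaps the roles of sub and quotient. Your added remarks on exactness of dualisation via freeness of the underlying modules, and the explicit triangular-form check, are slightly more careful than the paper's one-line justification but do not change the argument.
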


\begin{proof}
Taking the transpose of the inverse of a multiplication of matrices keeps the order of the multiplication. Therefore all the relations of \eqref{eq:relations} are therefore straightforwardly satisfied and proves that $Bur^{*}$ is a representation. The reduced version $\overline{Bur}^{*}$ is induced by taking the transpose of the inverse of the short exact sequence \eqref{eq:sesburau}: taking the inverse keeps the direction of the arrows, whereas the transpose reverses this direction (since it exchanges lines and columns of matrices) and we obtain \eqref{eq:dualsesburau}.
\end{proof}

The dual Burau representation was already introduced in \cite[Section 4]{Vershinin}.

\subsubsection{The Tong-Yang-Ma procedure and associated representations.}
In 1996, Tong, Yang and Ma \cite{TYM} investigated the representations of $\mathbf{B}_{n}$ where the $i$-th generator is sent to a non-singular matrix of the form
$$Id_{i-1}\oplus \left[\begin{array}{cc}
a & b\\
c & d
\end{array}\right]\oplus Id_{n-i-1}.$$In particular, they proved that there exist (up to equivalence and dual) only two non trivial representations of this type: the unreduced Burau representations and a new irreducible representation, called the \emph{Tong-Yang-Ma} representation. This last family lifts to define two families of linear representations of the welded braid group $\mathbf{wB}_{n}$.

\begin{prop}\label{def:TYM}
The following assignment defines a representation $TYM:\mathbf{wB}_{n}\rightarrow GL_{n}\left(\mathbb{Z}[t^{\pm1}]\right)$ called the Tong-Yang-Ma representation of the welded braid group $\mathbf{wB}_{n}$:
$$\sigma_{i}\mapsto Id_{i-1}\oplus\left[\begin{array}{cc}
0 & 1\\
t & 0
\end{array}\right]\oplus Id_{n-i-1}\,\textrm{ and }\,\tau_{i}\mapsto Id_{i-1}\oplus\left[\begin{array}{cc}
0 & 1\\
1 & 0
\end{array}\right]\oplus Id_{n-i-1}$$
for all natural numbers $i\in\left\{ 1,\ldots,n-1\right\} $. Taking the transpose of the inverse of the above matrices defines the dual Tong-Yang-Ma representation $TYM^{*}:\mathbf{wB}_{n}\rightarrow GL_{n}\left(\mathbb{Z}[t^{\pm1}]\right)$ (which is obviously not equivalent to $TYM$).
\end{prop}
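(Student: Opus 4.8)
The plan is to follow the proof of Proposition \ref{def:burau} essentially verbatim, since the two assignments share the same block structure. The matrices attached to the $\sigma_i$ are precisely the Tong-Yang-Ma representation of the braid group $\mathbf{B}_n$ produced in \cite{TYM}, so the first two families of relations in \eqref{eq:relations} hold automatically; the matrices attached to the $\tau_i$ are the transposition matrices exchanging the $i$-th and $(i+1)$-th basis vectors, which assemble into the natural permutation representation of $\mathfrak{S}_n$ and hence satisfy the third, fourth and fifth families (far commutation, braid relation, and $\tau_i^2=1$). By \eqref{eq:relations} it therefore only remains to check the three families of mixed relations linking the two types of generator.

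The far commutation $\sigma_i\tau_k=\tau_k\sigma_i$ for $|i-k|\geq 2$ is immediate, as the two altered $2\times 2$ blocks then occupy disjoint coordinates. Each of the remaining relations $\tau_i\sigma_{i+1}\sigma_i=\sigma_{i+1}\sigma_i\tau_{i+1}$ and $\sigma_i\tau_{i+1}\tau_i=\tau_{i+1}\tau_i\sigma_{i+1}$ is a single identity between $3\times 3$ matrices supported on the coordinates $\{i,i+1,i+2\}$, verified by direct multiplication: both sides of the first send $e_i\mapsto t^2 e_{i+2}$, $e_{i+1}\mapsto e_{i+1}$ and $e_{i+2}\mapsto e_i$, while both sides of the second send $e_i\mapsto e_{i+2}$, $e_{i+1}\mapsto t\,e_{i+1}$ and $e_{i+2}\mapsto e_i$. (For a self-contained treatment one also records that the braid relation among the $\sigma_i$ amounts to the common value $e_i\mapsto t^2 e_{i+2}$, $e_{i+1}\mapsto t\,e_{i+1}$, $e_{i+2}\mapsto e_i$.) This establishes that $TYM$ is a homomorphism.

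For the dual representation I would argue exactly as in Proposition \ref{def:dualburau}: since both the inverse and the transpose reverse the order of a product, their composite $M\mapsto (M^{-1})^T$ preserves it, hence preserves the order of any word in the generators; consequently every relation of \eqref{eq:relations} satisfied by the $TYM$ matrices is inherited by their transpose-inverses, and $TYM^*$ is a representation. The inequivalence with $TYM$ is then read off the generators over $\mathbb{Z}[t^{\pm1}]$: one has $\det TYM(\sigma_i)=-t$ but $\det TYM^*(\sigma_i)=-t^{-1}$, and the determinant of the image of a fixed group element is a conjugation invariant, so no $P\in GL_n(\mathbb{Z}[t^{\pm1}])$ can intertwine the two representations.

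Since every verification reduces to a finite block computation, there is no genuine obstacle here; the only point demanding attention is to form all matrix products in the order dictated by the right-to-left composition convention fixed in the Notations, so that the block identities match the group relations rather than their reverses.
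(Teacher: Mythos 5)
Your proof is correct and follows the same route as the paper: the paper likewise reduces to the mixed relations of \eqref{eq:relations} and settles them by "straightforward computations", which you have simply carried out explicitly (your block identities check out under the right-to-left convention), and your treatment of $TYM^{*}$ mirrors the transpose-of-inverse argument of Proposition \ref{def:dualburau}. The determinant computation $\det TYM(\sigma_{i})=-t$ versus $\det TYM^{*}(\sigma_{i})=-t^{-1}$ is a welcome justification of the non-equivalence claim that the paper leaves as "obvious".
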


\begin{proof}
As above for the Burau representations, we just have to check the compatibility with respect to the mixed relations \eqref{eq:relations} between the braid and symmetric generators of $\mathbf{wB}_{n}$: this is done by straightforward computations.
\end{proof}

In addition, we can carry out the analogous heuristic approach to \cite{TYM} for the welded braid groups. For all $i\in\left\{ 1,\ldots,n-1\right\} $, we denote by $\textrm{incl}_{i}^{n}:\mathbf{wB}_{2}\cong\mathbb{Z}\times\mathbb{Z}/2\mathbb{Z}\hookrightarrow\mathbf{wB}_{n}$ the inclusion morphism induced by $\textrm{incl}_{i}^{n}\left(\sigma_{1}\right)=\sigma_{i}$ and $\textrm{incl}_{i}^{n}\left(\tau_{1}\right)=\tau_{i}$.

\begin{prop}\label{def:unique}
Let $\eta_{n}:\mathbf{wB}_{n}\longrightarrow GL_{n}$ be a representation. Assume that for all $i\in\left\{ 1,\ldots,n-1\right\} $ the following diagram is commutative:$$\xymatrix{\mathbf{wB}_{n}\ar@{->}[rr]^{\eta_{n}} & & GL_{n}\left(\mathbb{Z}\left[t^{\pm1}\right]\right)\\
\mathbf{wB}_{2}\ar@{->}[rr]_{\eta_{2}}\ar@{->}[u]^{\textrm{incl}_{i}^{n}} & & GL_{2}\left(\mathbb{Z}\left[t^{\pm1}\right]\right).\ar@{->}[u]_{id_{i-1}\oplus-\oplus id_{n-i-1}}
}$$Then $\eta_{n}$ is equivalent or dual to the trivial representation, or to the (unreduced) Burau representation $Bur$, or to the Tong-Yang-Ma representation $TYM$, or else to the specialisation at $t=1$ of the Burau representation.
\end{prop}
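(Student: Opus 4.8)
The plan is to turn the hypothesis into a finite system of matrix equations in two $2\times2$ matrices and then solve it. Commutativity of the square for every $i$ says precisely that $\eta_n$ is local and constant: setting $A:=\eta_2(\sigma_1)$ and $T:=\eta_2(\tau_1)$ in $GL_2(\mathbb{Z}[t^{\pm1}])$, one has $\eta_n(\sigma_i)=Id_{i-1}\oplus A\oplus Id_{n-i-1}$ and $\eta_n(\tau_i)=Id_{i-1}\oplus T\oplus Id_{n-i-1}$ for all $i$. Thus $\eta_n$ is completely determined by the pair $(A,T)$, and the only question is for which pairs this assignment is compatible with the relations \eqref{eq:relations}.

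First I would discard the automatic relations. The three commutation relations, together with $\sigma_i\tau_k=\tau_k\sigma_i$ for $\lvert i-k\rvert\ge2$, hold for free, since the blocks involved act on disjoint coordinates. What remains to impose is the braid relation on $A$, the braid relation on $T$, the involutivity $T^2=Id_2$, and the two mixed relations $\tau_i\sigma_{i+1}\sigma_i=\sigma_{i+1}\sigma_i\tau_{i+1}$ and $\sigma_i\tau_{i+1}\tau_i=\tau_{i+1}\tau_i\sigma_{i+1}$. As every generator occurring is supported on three consecutive strands, each of these is a single $3\times3$ identity, so it suffices to compute inside $\mathbf{wB}_3$.

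Next I would solve these identities in stages. Writing the entries of $A$ as $a,b,c,d$, the braid relation expands, over the integral domain $\mathbb{Z}[t^{\pm1}]$, into the Tong--Yang--Ma system $a(a+bc-1)=0$, $abd=acd=0$, $ad(a-d)=0$, $d(d+bc-1)=0$; its invertible solutions split into four families --- the identity; the anti-diagonal matrices $(a=d=0,\ bc\ \text{a unit})$; and two mutually dual families with exactly one of $a,d$ vanishing, which are the Burau ones. This is the classification behind \cite{TYM}, which I may simply invoke for the $\sigma$-part. The matrix $T$ must solve the same system and in addition satisfy $T^2=Id_2$; the latter eliminates the two Burau families and leaves only $T=Id_2$ or $T$ anti-diagonal with $bc=1$, the second being, up to the diagonal rescaling available to us, the permutation matrix. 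A direct check of $\sigma_i\tau_{i+1}\tau_i=\tau_{i+1}\tau_i\sigma_{i+1}$ shows that $T=Id_2$ forces $\eta_n(\sigma_i)=\eta_n(\sigma_{i+1})$, hence $A=Id_2$: the trivial representation. When $T$ is the permutation matrix, the other mixed relation collapses to the scalar conditions $a(c-1)=0$, $ad=0$, $d(c-1)=0$, which hold automatically for anti-diagonal $A$ and which, for the Burau families, force $c=1$ and thereby pin $A$ to the Burau normal form.

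Finally I would normalise. The simultaneous rescalings $x_i\mapsto\rho^{i}x_i$, which are exactly the conjugations preserving the local constant shape, together with the dual involution $\eta\mapsto\eta^{*}$, $\eta^{*}(g)=\eta(g^{-1})^{T}$, bring each surviving pair to a standard representative: the family with $d=0$ and its dual collapse, via the dual, onto the single unreduced Burau matrix of Proposition \ref{def:burau}; the anti-diagonal family with $bc\ne1$ becomes the Tong--Yang--Ma matrix of Proposition \ref{def:TYM}; and the degenerate anti-diagonal case $bc=1$, in which $A$ and $T$ are both the permutation matrix and $\sigma_i,\tau_i$ are sent to the same transposition, is exactly the specialisation of $Bur$ at $t=1$. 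I expect the real work, and the main obstacle, to be not any individual relation but this last bookkeeping: keeping the case distinctions exhaustive and verifying that the restricted conjugation freedom (acting simultaneously on $A$ and $T$) together with the dual genuinely identifies these \emph{a priori} multi-parameter families with the four named representations, the $t=1$ Burau arising precisely as the degeneration of the anti-diagonal Tong--Yang--Ma family.
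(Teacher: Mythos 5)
Your proof is correct and follows essentially the same route as the paper: reduce everything to the two local $2\times 2$ blocks $A=\eta_2(\sigma_1)$ and $T=\eta_2(\tau_1)$, invoke the Tong-Yang-Ma classification of solutions of the braid relation for the $\sigma$-block, and use $T^{2}=Id$ together with the mixed relations to pin down the $\tau$-block and couple it to $A$. You are in fact slightly more careful than the paper at one point: instead of asserting that the permutation matrix is the only available non-trivial order-two matrix for the $\tau_i$, you derive this from the braid-relation system combined with $T^{2}=Id$, which correctly excludes involutions such as $-Id_{2}$ or $\mathrm{diag}(1,-1)$ that are not conjugate to the permutation matrix over $\mathbb{Z}[t^{\pm 1}]$.
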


\begin{proof}
Restricting along the natural inclusion $\mathbf{B}_{n}\hookrightarrow\mathbf{wB}_{n}$, it follows from \cite[Part II]{TYM} that three only possible matrices (up to equivalence and dual) on which the braid generators $\left\{ \sigma_{i}\right\} _{i\in\left\{ 1,\ldots,n-1\right\} }$ can be sent to are the trivial, Burau and Tong-Yang-Ma matrices.
Note that the only non-trivial representation $\mathbb{Z}/2\mathbb{Z}\rightarrow GL_{2}\left(\mathbb{Z}\left[t^{\pm1}\right]\right)$ is the permutation sending the non-trivial element of $\mathbb{Z}/2\mathbb{Z}$ to the permutation matrix.

There are thus two choices for the symmetric generators $\left\{ \tau_{i}\right\} _{i\in\left\{ 1,\ldots,n-1\right\} }$. First the relation $\tau_{i}\tau_{i+1}\tau_{i}=\tau_{i+1}\tau_{i}\tau_{i+1}$ of \eqref{eq:relations} implies that all these generators are sent to either a non-trivial matrix or the trivial matrix. Then it follows from the relation $\tau_{i}\sigma_{i+1}\sigma_{i}=\sigma_{i+1}\sigma_{i}\tau_{i+1}$ of \eqref{eq:relations} that the symmetric generators are sent to the identity matrix if the braid generators are, and to the permutation matrices otherwise.
\end{proof}


\section{The Long-Moody construction for welded braid groups}\label{sec:LM}

In $1994$, Long and Moody \cite{Long1} gave a method to construct a new linear representation of $\mathbf{B}_{n}$ from a representation of $\mathbf{B}_{n+1}$, complexifying in a sense the initial representation. For instance, it reconstructs the unreduced Burau representation from a one dimensional representation. It was studied from a functorial point of view and extended in \cite{soulie1} and then generalised to other families of groups \cite{soulie2}. In particular, the underlying framework of this method, called the \emph{Long-Moody construction}, naturally arises considering representations of \emph{welded} braid groups: the aim of this section is the study of this construction in this case. We fix a natural number $n\geq3$ all along \S\ref{sec:LM}.

\subsection{The theoretical setting of the Long-Moody construction}\label{sec:theoretical}

We detail here the required tool and present the abstract definition of the Long-Moody construction.

\paragraph{Tool.}
Recall that $\mathbf{F}_{n}=\left\langle x_{1},\ldots,x_{n}\right\rangle $ is the free group on $n$ generators.
The key ingredient to define the Long-Moody construction for welded braid groups is to find a group morphisms $\alpha_{n}\colon \mathbf{wB}_{n}\rightarrow\Aut\left(\mathbf{F}_{n}\right)$ and $\xi_{n}\colon\mathbf{F}_{n}\rightarrow\mathbf{wB}_{n+1}$ such that:
\begin{itemize}
   \item the morphism $\mathbf{F}_{n}*\mathbf{wB}_{n}\rightarrow\mathbf{wB}_{n+1}$ given by the coproduct of $\xi_{n}$ and $id_{1}*-$ factors across the canonical surjection to the semidirect product $\mathbf{F}_{n}\underset{\alpha_{n}}{\rtimes}\mathbf{B}_{n}$;
   \item the following diagram is commutative
\begin{equation}\label{eq:diagram}
\xymatrix{\mathbf{F}_{n}\ar@{^{(}->}[r]\ar@{->}[dr]_{\xi_{n}} & \mathbf{F}_{n}\underset{\alpha_{n}}{\rtimes}\mathbf{wB}_{n}\ar@{->}[d] & \mathbf{wB}_{n}\ar@{_{(}->}[l]\ar@{->}[dl]^{id_{1}*-}\\
 & \mathbf{wB}_{n+1},
}
\end{equation}
where the vertical morphism $\mathbf{F}_{n}\underset{\alpha_{n}}{\rtimes}\mathbf{wB}_{n}\rightarrow\mathbf{wB}_{n+1}$ is induced by the coproduct of $\xi_{n}$ and $id_{1}*-$ morphism $\mathbf{F}_{n}*\mathbf{B}_{n}\rightarrow\mathbf{B}_{n+1}$.
\end{itemize}

In other words, we require that for all elements $\lambda\in \mathbf{wB}_{n}$ and
$x\in \mathbf{F}_{n}$ the morphism $\xi_{n}$ satisfies the following equality in $\mathbf{wB}_{n+1}$:
\begin{equation}
\left(id_{1}*\lambda\right)\circ\xi_{n}\left(x\right)=\xi_{n}\left(\alpha_{n}\left(\lambda\right)\left(x\right)\right)\circ\left(id_{1}*\lambda\right).\label{eq:cond1}
\end{equation}

\paragraph{Definition.} The Long-Moody construction is defined as follows. We fix an abelian group $V$. We denote by $\mathcal{I}_{\mathbf{F}_{n}}$ the augmentation ideal of the group ring $\mathbb{Z}\left[\mathbf{F}_{n}\right]$. Note that the action $\alpha_{n}$ canonically induces an action of $\mathbf{wB}_{n}$ on $\mathcal{I}_{\mathbf{F}_{n}}$ (that we denote in the same way for convenience).

Let $\rho:\mathbf{wB}_{n+1}\rightarrow Aut_{\mathbb{Z}}\left(V\right)$ be a linear representation. Precomposing by the morphism $\xi_{n}$, $\rho$ gives the module $V$ a $\mathbf{F}_{n}$-module structure. Then the Long-Moody construction
$$\mathbf{LM}\left(\rho\right):\mathbf{wB}_{n}\rightarrow Aut_{\mathbb{Z}}\left(\mathcal{I}_{\mathbf{F}_{n}}\underset{\mathbf{F}_{n}}{\varotimes}V\right)$$
is the map defined by:
\[
\mathbf{LM}\left(\rho\right)\left(\lambda\right)\left(i\underset{\mathbf{F}_{n}}{\varotimes}v\right)=\left(\alpha_{n}\left(\lambda\right)\left(i\right)\underset{\mathbf{F}_{n}}{\varotimes}\rho\left(id_{1}\ast\lambda\right)\left(v\right)\right)
\]
for all $\lambda\in \mathbf{wB}_{n}$, $i\in\mathcal{I}_{\mathbf{F}_{n}}$ and $v\in V$. For sake of completeness, we detail that:

\begin{lem}\cite[Section 2.2.4]{soulie2} The representation $\mathbf{LM}\left(\rho\right)$ is well-defined.
\end{lem}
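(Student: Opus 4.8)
The plan is to establish two independent facts: that for each fixed $\lambda \in \mathbf{wB}_n$ the formula descends to a well-defined $\mathbb{Z}$-linear endomorphism of the balanced tensor product $\mathcal{I}_{\mathbf{F}_n} \otimes_{\mathbf{F}_n} V$, and that the resulting assignment $\lambda \mapsto \mathbf{LM}(\rho)(\lambda)$ is a group homomorphism with values in $Aut_{\mathbb{Z}}$. Before either, I would record the module structures: an automorphism of $\mathbf{F}_n$ extends $\mathbb{Z}$-linearly to a ring automorphism of $\mathbb{Z}[\mathbf{F}_n]$ commuting with the augmentation, hence preserving $\mathcal{I}_{\mathbf{F}_n}$, which is why $\alpha_n$ yields a $\mathbb{Z}$-linear action on the augmentation ideal. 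I view $\mathcal{I}_{\mathbf{F}_n}$ as a right $\mathbb{Z}[\mathbf{F}_n]$-module by multiplication and $V$ as a left $\mathbb{Z}[\mathbf{F}_n]$-module via $\rho \circ \xi_n$, so that the defining relation in the tensor product reads $i \cdot g \otimes v = i \otimes \rho(\xi_n(g))(v)$ for every $g \in \mathbf{F}_n$.

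The crux --- and the only place where the hypotheses on $\xi_n$ and $\alpha_n$ enter --- is to check that the bilinear map $(i, v) \mapsto \alpha_n(\lambda)(i) \otimes \rho(id_1 \ast \lambda)(v)$ is $\mathbf{F}_n$-balanced. Feeding $i \cdot g$ into it and using that $\alpha_n(\lambda)$ is a ring automorphism gives $\alpha_n(\lambda)(i) \cdot \alpha_n(\lambda)(g) \otimes \rho(id_1 \ast \lambda)(v)$; moving the group element $\alpha_n(\lambda)(g)$ across the tensor symbol produces $\alpha_n(\lambda)(i) \otimes \rho\big(\xi_n(\alpha_n(\lambda)(g)) \circ (id_1 \ast \lambda)\big)(v)$. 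Feeding instead $g \cdot v = \rho(\xi_n(g))(v)$ into the second slot produces $\alpha_n(\lambda)(i) \otimes \rho\big((id_1 \ast \lambda) \circ \xi_n(g)\big)(v)$. These two outputs coincide exactly by the compatibility relation \eqref{eq:cond1}, that is, $(id_1 \ast \lambda) \circ \xi_n(g) = \xi_n(\alpha_n(\lambda)(g)) \circ (id_1 \ast \lambda)$ in $\mathbf{wB}_{n+1}$. Hence the map factors through $\mathcal{I}_{\mathbf{F}_n} \otimes_{\mathbf{F}_n} V$, and it is $\mathbb{Z}$-linear as a tensor product of $\mathbb{Z}$-linear maps.

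It then remains to verify the homomorphism property, which is purely formal. Evaluating on a simple tensor and using that $\alpha_n$, $id_1 \ast -$ and $\rho$ are each group homomorphisms, both $\mathbf{LM}(\rho)(\lambda \lambda')$ and $\mathbf{LM}(\rho)(\lambda) \circ \mathbf{LM}(\rho)(\lambda')$ send $i \otimes v$ to $\alpha_n(\lambda)(\alpha_n(\lambda')(i)) \otimes \rho(id_1 \ast \lambda)(\rho(id_1 \ast \lambda')(v))$, so they agree. Since $\alpha_n(1) = \mathrm{id}$ and $\rho(id_1 \ast 1) = \mathrm{id}$ give $\mathbf{LM}(\rho)(1) = \mathrm{id}$, each $\mathbf{LM}(\rho)(\lambda)$ is invertible with inverse $\mathbf{LM}(\rho)(\lambda^{-1})$, so the image lands in $Aut_{\mathbb{Z}}(\mathcal{I}_{\mathbf{F}_n} \otimes_{\mathbf{F}_n} V)$. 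I expect the balancing computation to be the one genuinely substantive step; the homomorphism and invertibility checks are bookkeeping with the three structural homomorphisms.
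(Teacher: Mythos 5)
Your proposal is correct and follows essentially the same route as the paper's proof: the substantive step is exactly the verification, via the compatibility relation \eqref{eq:cond1}, that the map is balanced over $\mathbb{Z}\left[\mathbf{F}_{n}\right]$, and the homomorphism/invertibility properties then follow formally from $\alpha_{n}$, $id_{1}\ast-$ and $\rho$ being morphisms. You simply spell out in more detail the module structures and the two sides of the balancing computation, which the paper leaves terse.
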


\begin{proof}
We consider elements $\lambda\in\mathbf{wB}_{n}$, $x\in\mathbf{F}_{n}$, $v\in V$ and $i\in\mathcal{I}_{\mathbf{F}_{n}}$.
First, since $\rho$ is a morphism, we deduce from \eqref{eq:cond1} that
$$\mathbf{LM}\left(\rho\right)\left(\lambda\right)\left(i\underset{\mathbf{F}_{n}}{\varotimes}\rho\left(\xi_{n}\left(x\right)\right)\left(v\right)\right)=\mathbf{LM}\left(\rho\right)\left(\lambda\right)\left(i\cdot x\underset{\mathbf{F}_{n}}{\varotimes}v\right),$$
which gives the compatibility of the assignment $\mathbf{LM}\left(\rho\right)$ with respect to the tensor product over $\mathbb{Z}\left[\mathbf{F}_{n}\right]$. Then this assignment $\mathbf{LM}\left(\rho\right)$ defines a morphism on $\mathbf{wB}_{n}$ follows from the fact that $\alpha_{n}$ and $\rho$ are themselves morphisms.
\end{proof}

The natural candidate for the morphism $\alpha_{n}$ is the Artin homomorphism $a_{n}$ recalled in \S\ref{Artin homomorphism} and we fix the assignment $\alpha_{n}=a_{n}$ from now on. We could use another Wada representation for $a_{n}$ (recalled in \S\ref{sec:Wada}), but we prove in \S\ref{impossiblerecoverTYM} that this is actually not relevant for welded braid groups.

There is always the choice of the trivial morphism $\mathbf{F}_{n}\rightarrow 0\rightarrow \mathbf{wB}_{n+1}$ as $\xi_{n}$ so that relation \eqref{eq:cond1} is satisfied. However the construction is much more interesting using a non-trivial morphism for this parameter. Indeed applying the Long-Moody construction with the trivial $\xi_{n}$ to a one-dimensional representation provides the permutation representation of $\mathbf{wB}_{n}$ (sending both the symmetric and braid generators on the permutation matrix). Moreover, the iteration of this Long-Moody construction gives the tensor powers of that permutation representation. We refer the reader to \cite[Section 2.2.5]{soulie2} for further details on that point. For that reason, the main point consists in finding non-trivial $\xi_{n}$ such that the diagram \eqref{eq:diagram} is commutative, which is the aim the following section.

\subsection{The natural example for welded braids}

We give in this section a first example of a (non-trivial) Long-Moody construction for welded braid groups. It relies on the following natural candidate for the choice of the required morphism $\xi_{n}$.

Let $\xi_{n,1}:\ensuremath{\mathbf{F}_{n}}\hookrightarrow\mathbf{wB}_{n+1}$ be the injective morphism defined by
$$x_{i}\longmapsto\left(\tau_{i-1}\cdots\tau_{2}\tau_{1}\right)^{-1}\left(\sigma_{i}\tau_{i}\right)\left(\tau_{i-1}\cdots\tau_{2}\tau_{1}\right).$$
It is natural in the sense that it identifies the free group $\mathbf{F}_{n}$ with the free subgroup of order $n$ of $\mathbf{D}_{n}$ (see \S\ref{Artin homomorphism}) generated by the elements $\left\{ \epsilon_{j,1}\right\} _{2\leq j\leq n+1}$. They are also similar to the analogous parameter for the original construction for braid groups (see \cite[Example 2.7]{soulie1}). Moreover they satisfy the key condition to define a Long-Moody construction:

\begin{lem}
The morphism $\xi_{n,1}$ satisfies the equality \eqref{eq:cond1}.
\end{lem}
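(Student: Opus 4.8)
The plan is to prove the equivalent conjugated form of \eqref{eq:cond1}, namely
$$\left(id_{1}*\lambda\right)\,\xi_{n,1}(x)\,\left(id_{1}*\lambda\right)^{-1}=\xi_{n,1}\!\left(a_{n}(\lambda)(x)\right)\quad\text{in }\mathbf{wB}_{n+1},$$
and to verify it after applying the \emph{injective} Artin homomorphism $a_{n+1}\colon\mathbf{wB}_{n+1}\hookrightarrow\Aut\left(\mathbf{F}_{n+1}\right)$, so that both sides become explicit automorphisms of $\mathbf{F}_{n+1}=\left\langle x_{1},\ldots,x_{n+1}\right\rangle$. Because the desired equality is preserved under products in $x$ (using that $\xi_{n,1}$ and each $a_{n}(\lambda)$ are homomorphisms) and in $\lambda$ (using that $id_{1}*-$ and $a_{n}$ are homomorphisms), it would in principle suffice to treat generators; the argument I give, however, dispatches general $\lambda$ and $x$ at once.

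First I would record the relevant structural data. Write $\mathrm{sh}\colon\mathbf{F}_{n}\rightarrow\mathbf{F}_{n+1}$ for the shift $x_{j}\mapsto x_{j+1}$, whose image is $\left\langle x_{2},\ldots,x_{n+1}\right\rangle$. For $g$ in this subgroup let $c_{g}\in\Aut\left(\mathbf{F}_{n+1}\right)$ be the automorphism fixing $x_{2},\ldots,x_{n+1}$ and sending $x_{1}\mapsto g^{-1}x_{1}g$; a one-line check gives $c_{g}\circ c_{h}=c_{gh}$, so $g\mapsto c_{g}$ is a homomorphism. By the identification of $\xi_{n,1}$ with the free subgroup $\left\{\epsilon_{j,1}\right\}_{2\leq j\leq n+1}$ of $\mathbf{D}_{n}$ recalled above, we have $a_{n+1}\!\left(\xi_{n,1}(x_{j})\right)=\epsilon_{j+1,1}=c_{x_{j+1}}=c_{\mathrm{sh}(x_{j})}$, whence the homomorphism property yields, for every $x\in\mathbf{F}_{n}$,
$$a_{n+1}\!\left(\xi_{n,1}(x)\right)=c_{\mathrm{sh}(x)}.$$

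Next I would analyse $\Phi:=a_{n+1}\!\left(id_{1}*\lambda\right)$. Since $id_{1}*-$ sends $\sigma_{i},\tau_{i}$ to $\sigma_{i+1},\tau_{i+1}$, the element $id_{1}*\lambda$ lies in the image of the ``add a circle on the left'' inclusion, so its Artin image $\Phi$ fixes $x_{1}$, restricts to an automorphism of $\left\langle x_{2},\ldots,x_{n+1}\right\rangle$, and obeys the intertwining identity $\Phi\circ\mathrm{sh}=\mathrm{sh}\circ a_{n}(\lambda)$. Conjugating then gives $\Phi\circ c_{g}\circ\Phi^{-1}=c_{\Phi(g)}$: on $x_{1}$ one uses $\Phi(x_{1})=x_{1}$, and on $x_{m}$ with $m\geq 2$ one uses that $\Phi^{-1}(x_{m})\in\left\langle x_{2},\ldots,x_{n+1}\right\rangle$ is fixed by $c_{g}$. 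Combining with the previous display, the left-hand side becomes $c_{\Phi(\mathrm{sh}(x))}=c_{\mathrm{sh}(a_{n}(\lambda)(x))}$, which is exactly $a_{n+1}\!\left(\xi_{n,1}(a_{n}(\lambda)(x))\right)$, the right-hand side; injectivity of $a_{n+1}$ finishes the proof.

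The only substantive step is the intertwining identity $\Phi\circ\mathrm{sh}=\mathrm{sh}\circ a_{n}(\lambda)$, i.e.\ that adding a circle on the left realises the Artin action of $\lambda$ shifted up by one index while leaving $x_{1}$ untouched; once this is in place, everything reduces to formal manipulation of the conjugations $c_{g}$. I would check it on the generators $\sigma_{k},\tau_{k}$ directly from the defining formulas for $a_{n}(\sigma_{i})$ and $a_{n}(\tau_{i})$, extending to all $\lambda$ since both sides are multiplicative in $\lambda$. Should one prefer to avoid passing through $\Aut\left(\mathbf{F}_{n+1}\right)$, a more pedestrian alternative is to reduce to $\lambda\in\left\{\sigma_{k},\tau_{k}\right\}$ and $x=x_{i}$ and verify \eqref{eq:cond1} as an identity of words in $\mathbf{wB}_{n+1}$ using the presentation \eqref{eq:relations}; there the main obstacle is the bookkeeping of the mixed relations $\tau_{i}\sigma_{i+1}\sigma_{i}=\sigma_{i+1}\sigma_{i}\tau_{i+1}$ and $\sigma_{i}\tau_{i+1}\tau_{i}=\tau_{i+1}\tau_{i}\sigma_{i+1}$ for adjacent indices.
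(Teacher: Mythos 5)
Your argument is correct, but it takes a genuinely different route from the paper. The paper's proof stays inside $\mathbf{wB}_{n+1}$ and verifies \eqref{eq:cond1} by direct manipulation of the defining words using the presentation \eqref{eq:relations}, checking the two non-trivial cases $\xi_{n,1}\left(a_{n}\left(\sigma_{i}\right)\left(x_{i}\right)\right)=\sigma_{i+1}\xi_{n,1}\left(x_{i}\right)\sigma_{i+1}^{-1}$ and $\xi_{n,1}\left(a_{n}\left(\sigma_{i}\right)\left(x_{i+1}\right)\right)=\sigma_{i+1}\xi_{n,1}\left(x_{i+1}\right)\sigma_{i+1}^{-1}$ via the mixed relations (the $\tau_{i}$ and far-commuting cases being left implicit). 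You instead push everything through the injective Artin homomorphism $a_{n+1}$, observe that $a_{n+1}\circ\xi_{n,1}$ lands in the conjugation automorphisms $c_{g}$, and reduce \eqref{eq:cond1} to the formal identity $\Phi\circ c_{g}\circ\Phi^{-1}=c_{\Phi\left(g\right)}$ together with the intertwining $\Phi\circ\mathrm{sh}=\mathrm{sh}\circ a_{n}\left(\lambda\right)$. This buys uniformity (all $\lambda$ and all $x$ at once, with no case analysis on the relations) and makes transparent \emph{why} the condition holds, namely that $\xi_{n,1}$ realises $\mathbf{F}_{n}$ as the normal free factor $\left\langle \epsilon_{j,1}\right\rangle$ of $\mathbf{D}_{n}$ on which $id_{1}*\mathbf{wB}_{n}$ acts by the shifted Artin action. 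The price is that you lean on two external inputs the paper only asserts: the injectivity of $a_{n+1}$ (cited to Damiani's survey) and the identification $a_{n+1}\left(\xi_{n,1}\left(x_{j}\right)\right)=\epsilon_{j+1,1}$, which you should really verify by a short computation (it follows from $a_{n+1}\left(\sigma_{j}\tau_{j}\right)\colon x_{j}\mapsto x_{j+1}^{-1}x_{j}x_{j+1}$ fixing all other generators, conjugated by the permutation $\tau_{j-1}\cdots\tau_{1}$), as well as the intertwining identity, which you correctly flag as the one substantive step; it is exactly the paper's statement that adding a circle on the left corresponds to $id_{1}*-$ on $\Aut\left(\mathbf{F}_{n}\right)$. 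The paper's computation, by contrast, is self-contained in the presentation but gives less insight.
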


\begin{proof}
Since
$$\sigma_{i+1}\sigma_{i}\tau_{i}\sigma_{i+1}^{-1}=\tau_{i}\sigma_{i+1}\sigma_{i}\tau_{i+1}\tau_{i}\sigma_{i+1}^{-1}=\tau_{i}\sigma_{i+1}\tau_{i+1}\tau_{i}$$
then
$$\xi_{n,1}\left(a_{n}\left(\sigma_{i}\right)\left(x_{i}\right)\right)=\sigma_{i+1}\xi_{n,1}\left(x_{i}\right)\sigma_{i+1}^{-1}.$$

Also, note that
$$\tau_{i}\sigma_{i+1}\tau_{i+1}\tau_{i}\sigma_{i+1}\tau_{i}\sigma_{i+1}\tau_{i+1}\tau_{i}=\sigma_{i+1}\sigma_{i}\sigma_{i+1}\tau_{i+1}\tau_{i}=\sigma_{i}\sigma_{i+1}\tau_{i+1}\tau_{i}\sigma_{i+1}$$
then
$$\tau_{i}\tau_{i+1}\sigma_{i+1}^{-1}\tau_{i}\sigma_{i}\sigma_{i+1}\tau_{i+1}\tau_{i}=\sigma_{i+1}\tau_{i}\sigma_{i+1}\tau_{i+1}\tau_{i}\sigma_{i+1}^{-1}$$
hence
$$\xi_{n,1}\left(a_{n}\left(\sigma_{i}\right)\left(x_{i+1}\right)\right)=\sigma_{i+1}\xi_{n,1}\left(x_{i+1}\right)\sigma_{i+1}^{-1}.$$
\end{proof}

We thus define a Long-Moody construction associated with this parameter $\xi_{n,1}$ and denote it by $\mathbf{LM}_{1}$. The condition \eqref{eq:cond1} is however quite restrictive and the current example follows the spirit of the original Long-Moody construction \cite{Long1}: we therefore focus on the study of $\mathbf{LM}_{1}$ in this paper. By the way we do not claim to be exhaustive here concerning the study of other possible Long-Moody constructions defined by some other compatible parameters $\alpha_{n}$ and $\xi_{n}$.


\subsection{Applications}
In addition to recovering the unreduced Burau representation, the second iteration of the original Long-Moody construction recovers the Lawrence-Krammer representation \cite{bigelowbraid,KrammerLK,lawrencehomological} as a subrepresentation (see \cite[Corollary 2.10]{Long1} or \cite[Section 2.3.1]{soulie1}). The linearity of the welded braid groups being an open problem, there was a hope to construct an analogue of the Lawrence-Krammer representation for these groups. Unfortunately, if we can reconstruct the unreduced Burau representation (see \S\ref{sec:recoverBurau}), the construction $\mathbf{LM}_{1}$ does not produce something new at the second iteration (see \S\ref{sec:iterationBurau}).

On another note, the variants of the Long-Moody construction introduced in \cite{soulie1} allow to recover the Tong-Yang-Ma representation for braid groups (see \cite[Section 2.3.2]{soulie1}). However, we prove in \S\ref{sec:iterationBurau} that it is impossible to reconstruct the Tong-Yang-Ma representation for welded braid groups using \emph{any} Long-Moody construction.

\begin{notation}
Let $R$ be a ring and $r$ an invertible element of $R$. We denote by $r:\mathbf{wB}_{n}\rightarrow R^{\times}$ the one-dimensional representation defined by $r\left(\sigma_{i}\right)$ is the multiplication by $r$ and $r\left(\tau_{i}\right)$ is the identity for all $i\in\left\{ 1,\ldots,n-1\right\} $. Also for $\rho:\mathbf{wB}_{n}\rightarrow GL_{R}\left(V\right)$ a representation, we denote by $r\rho:\mathbf{wB}_{n}\rightarrow GL_{R}\left(V\right)$ the tensor representation $r\underset{R}{\otimes}\rho$.
\end{notation}

\subsubsection{Recovering the Burau representation}\label{sec:recoverBurau}

We start from the one-dimensional representation $t:\mathbf{wB}_{n}\rightarrow \mathbb{Z}\left[t^{\pm1}\right]^{\times}$. We obtain that:
\begin{thm}\label{thm:recbur}
The representation $t^{-1}\mathbf{LM}_{1}\left(t\right)$ is equivalent to $Bur$.
\end{thm}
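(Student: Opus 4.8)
The plan is to compute $\mathbf{LM}_{1}(t)$ explicitly in a natural basis and to check directly that, after the scalar twist by $t^{-1}$, its matrices on generators are exactly those of $Bur$ given in Proposition \ref{def:burau}.

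First I would pin down the representation space. The augmentation ideal $\mathcal{I}_{\mathbf{F}_{n}}$ is free of rank $n$ over $\mathbb{Z}\left[\mathbf{F}_{n}\right]$ with basis $\left\{x_{i}-1\right\}_{1\leq i\leq n}$ (standard from Fox calculus), so $\mathcal{I}_{\mathbf{F}_{n}}\otimes_{\mathbf{F}_{n}}V$ is a free $\mathbb{Z}\left[t^{\pm1}\right]$-module of rank $n$ with basis $e_{i}:=\left(x_{i}-1\right)\otimes 1$, which already matches the dimension of $Bur$. Next I would record the $\mathbf{F}_{n}$-module structure on $V=\mathbb{Z}\left[t^{\pm1}\right]$ obtained by precomposing $t$ with $\xi_{n,1}$: since each word $\xi_{n,1}\left(x_{i}\right)=\left(\tau_{i-1}\cdots\tau_{1}\right)^{-1}\left(\sigma_{i}\tau_{i}\right)\left(\tau_{i-1}\cdots\tau_{1}\right)$ contains exactly one letter of type $\sigma$ and the representation $t$ kills every $\tau$, each $x_{i}$ acts on $V$ by multiplication by $t$. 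Equivalently, $V$ is the rank-one $\mathbf{F}_{n}$-module attached to the character $\epsilon_{t}\colon\mathbb{Z}\left[\mathbf{F}_{n}\right]\to\mathbb{Z}\left[t^{\pm1}\right]$, $x_{j}\mapsto t$.

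Then I would evaluate $\mathbf{LM}_{1}(t)\left(\lambda\right)\left(e_{k}\right)=\bigl(a_{n}\left(\lambda\right)\left(x_{k}\right)-1\bigr)\otimes t\left(id_{1}\ast\lambda\right)\left(1\right)$ on the generators, expanding $a_{n}\left(\lambda\right)\left(x_{k}\right)-1=\sum_{l}\left(x_{l}-1\right)D_{l}\bigl(a_{n}\left(\lambda\right)\left(x_{k}\right)\bigr)$ by (right) Fox derivatives and pushing each coefficient $D_{l}$ onto $V$ via $\epsilon_{t}$. For the permutation generators $a_{n}\left(\tau_{i}\right)$ swaps $x_{i}\leftrightarrow x_{i+1}$ and fixes the rest while $t\left(id_{1}\ast\tau_{i}\right)=t\left(\tau_{i+1}\right)=1$, so $\mathbf{LM}_{1}(t)\left(\tau_{i}\right)$ is exactly the transposition of $e_{i}$ and $e_{i+1}$. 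For the braid generators the scalar is $t\left(\sigma_{i+1}\right)=t$; using $a_{n}\left(\sigma_{i}\right)\left(x_{i}\right)=x_{i+1}$ and $a_{n}\left(\sigma_{i}\right)\left(x_{i+1}\right)=x_{i+1}^{-1}x_{i}x_{i+1}$ one obtains, on the block spanned by $e_{i},e_{i+1}$, the matrix $\left[\begin{smallmatrix}0 & t^{2}\\ t & t-t^{2}\end{smallmatrix}\right]$, together with $e_{k}\mapsto t\,e_{k}$ for $k\neq i,i+1$.

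Finally, applying the scalar twist $t^{-1}$ (so that $\tau_{i}$ is unchanged and the $\sigma_{i}$-block is divided by $t$) turns these into $\left[\begin{smallmatrix}0 & 1\\ 1 & 0\end{smallmatrix}\right]$ for $\tau_{i}$ and $\left[\begin{smallmatrix}0 & t\\ 1 & 1-t\end{smallmatrix}\right]$ for $\sigma_{i}$, with the identity on the remaining coordinates — precisely the matrices of Proposition \ref{def:burau}. Thus $t^{-1}\mathbf{LM}_{1}(t)$ coincides with $Bur$ in the basis $\left\{e_{i}\right\}$, which gives the stated equivalence (indeed an equality in this basis). I expect no genuine obstacle here: the only delicate points are bookkeeping ones, namely fixing the right-module convention so the Fox-derivative coefficients reduce correctly on $V$ and carefully carrying the scalar factor $t\left(id_{1}\ast\lambda\right)$ — it is exactly this factor that makes the $\sigma_{i}$-block appear as $t$ times the Burau block, so that the $t^{-1}$ twist is precisely what is required.
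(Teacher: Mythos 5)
Your proposal is correct and follows essentially the same route as the paper's proof: compute the action of the Artin homomorphism on the basis $\left\{x_{k}-1\right\}$ of the augmentation ideal (the paper writes out the expansion $a_{n}\left(\sigma_{i}\right)\left(x_{i+1}-1\right)=\left(x_{i}-1\right)x_{i+1}+\left(x_{i+1}-1\right)\left(1-x_{i+1}^{-1}x_{i}x_{i+1}\right)$ that your Fox-derivative step produces), observe that $t\left(\xi_{n,1}\left(x_{j}\right)\right)=t$, and read off the matrices through the canonical isomorphism with $\mathbb{Z}\left[t^{\pm1}\right]^{\oplus n}$. Your explicit $2\times2$ block $\left[\begin{smallmatrix}0 & t^{2}\\ t & t-t^{2}\end{smallmatrix}\right]$ and the effect of the $t^{-1}$ twist are exactly what the paper leaves implicit in ``the result then directly follows from writing the obtained matrix''.
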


\begin{proof}
The key point to determine the form of the matrices of $t^{-1}\mathbf{LM}_{1}\left(t\right)$ is to understand the Artin homomorphism on the augmentation ideal. We compute that for all $i\in\left\{ 1,\ldots,n-1\right\}$ and $j\in\left\{ 1,\ldots,n\right\} $, $a_{n}\left(\sigma_{i}\right)$ sends $x_{j}-1$ to
$$\begin{cases}
x_{i+1}-1 & \textrm{if \ensuremath{j=i}}\\
\left(x_{i}-1\right)x_{i+1}+\left(x_{i+1}-1\right)\left(1-x_{i+1}^{-1}x_{i}x_{i+1}\right) & \textrm{if \ensuremath{j=i+1}}\\
x_{j}-1 & \textrm{if \ensuremath{j\notin\left\{ i,i+1\right\}}}
\end{cases}$$
and $a_{n}\left(\tau_{i}\right)$ sends $x_{j}-1$ to $$\begin{cases}
x_{i+1}-1 & \textrm{if \ensuremath{j=i}}\\
x_{i}-1 & \textrm{if \ensuremath{j=i+1}}\\
x_{j}-1 & \textrm{if \ensuremath{j\notin\left\{ i,i+1\right\} \textrm{.}}}
\end{cases}$$
Moreover, for all $k\in\left\{ 1,\ldots,n\right\}$, we denote $\mathbb{Z}\left[t^{\pm1}\right]{}_{k}=\mathbb{Z}\left[\left(x_{k}-1\right)\right]\underset{\mathbb{Z}\left[\mathbf{F}_{n}\right]}{\varotimes}\mathbb{Z}\left[t^{\pm1}\right]$ associated to the generator $x_{k}$ of $\mathbf{F}_{n}$. Since the augmentation ideal $\mathcal{I}_{\mathbf{F}_{n}}$ is a free $\mathbf{F}_{n}$-module on the set $\left\{ x_{k}-1,k\in\left\{ 1,\ldots,n\right\} \right\} $, we thus define an isomorphism
$$\begin{array}{ccc}
\varLambda:\mathcal{I}_{\mathbf{F}_{n}}\underset{\mathbf{F}_{n}}{\varotimes}\mathbb{Z}\left[t^{\pm1}\right] & \longrightarrow & \bigoplus_{k=1}^{n}\mathbb{Z}\left[t^{\pm1}\right]_{k}\\
\left(x_{k}-1\right)\underset{\mathbf{F}_{n}}{\varotimes}v & \longmapsto & \left(0,\ldots,0,\overset{\overset{k\textrm{-}th}{\overbrace{}}}{v},0,\ldots,0\right).
\end{array}$$
Note that $t\left(\xi_{n,1}\left(x_{i}\right)\right)=t$ for all $i\in\left\{ 1,\ldots,n\right\}$. Then result then directly follows from writing the obtained matrix through $\varLambda$.
\end{proof}

\subsubsection{Iteration on the Burau representations}\label{sec:iterationBurau}

We follow here the iteration procedure of \cite{Long1}. In particular, an attempt to define an analogue of the Lawrence-Krammer requires to consider one more variable compared to the Burau representation, i.e. to work on the ring of Laurent polynomials in two variables $\mathbb{Z}\left[t^{\pm1},q^{\pm1}\right]$. For that purpose, we iterate $\mathbf{LM}_{1}$ on the tensor product of the Burau representation with a one-dimensional representation in the new variable.

Also, from now on, we specify by an index in the notation which parameter we consider (i.e. $t$ or $q$) and the dimension of the welded braid group we consider for the Burau representation.

Finally, for convenience of computations reasons, we prefer to use the dual version of Burau representation $Bur^{*}_{n+1,t}$ as input representation and consider $q^{-1}\mathbf{LM}_{1}\left(qBur_{n+1,t}^{*}\right)$. We explain below why this choice does not impact the results presented here. We denote by $\left\{ e_{i}\right\} _{i\in\left\{ 1,\ldots,n+1\right\} }$ the basis for the matrices of the representation $Bur^{*}_{n+1,t}$.

First we prove that this iterate of the Long-Moody construction automatically admits the Burau representation as subrepresentation.

\begin{prop}\label{subrepLM}
The submodule of $\mathcal{I}_{\mathbf{F}_{n}}\underset{\mathbf{F}_{n}}{\varotimes}\mathbb{Z}\left[t^{\pm1},q^{\pm1}\right]^{\oplus n}$ generated by the elements $\left\{ \left(x_{k}-1\right)\otimes e_{1}\right\} _{k\in\left\{ 1,\ldots,n\right\} }$ is closed under the action of $\mathbf{wB}_{n}$. The induced subrepresentation is isomorphic to $Bur_{n,qt}$.
\end{prop}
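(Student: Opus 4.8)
The plan is to show that the single basis vector $e_{1}$ spans a line that is an eigenline for all the actions entering the construction, so that $q^{-1}\mathbf{LM}_{1}(qBur_{n+1,t}^{*})$ restricts to the submodule generated by the $(x_{k}-1)\otimes e_{1}$ and there collapses to exactly the situation of Theorem~\ref{thm:recbur}. Write $\rho=qBur_{n+1,t}^{*}$ and recall that $V=\mathbb{Z}[t^{\pm1},q^{\pm1}]^{\oplus(n+1)}$ acquires its $\mathbf{F}_{n}$-module structure through $\rho\circ\xi_{n,1}$. The central step is to verify that $e_{1}$ is a common eigenvector for this $\mathbf{F}_{n}$-action: I claim that for every $k\in\{1,\ldots,n\}$ the generator $x_{k}$ acts on $e_{1}$ by one and the same scalar $qt$. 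Granting this, the elements $w_{k}:=(x_{k}-1)\otimes_{\mathbf{F}_{n}} e_{1}$ span a submodule on which the tensor relation over $\mathbb{Z}[\mathbf{F}_{n}]$ converts right multiplication by any $x_{j}$ into multiplication by $qt$, precisely as the one-dimensional coefficient $t$ behaved in Theorem~\ref{thm:recbur}.

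To establish the eigenvalue claim I would use the explicit form $\xi_{n,1}(x_{k})=(\tau_{k-1}\cdots\tau_{1})^{-1}(\sigma_{k}\tau_{k})(\tau_{k-1}\cdots\tau_{1})$. Under the dual Burau representation the permutation word $\tau_{k-1}\cdots\tau_{1}$ carries $e_{1}$ to $e_{k}$, the block $\sigma_{k}\tau_{k}$ rescales $e_{k}$ by a power of $t$ without introducing any other basis vector, and the inverse permutation returns $e_{k}$ to $e_{1}$; meanwhile the unique $\sigma_{k}$ occurring in $\xi_{n,1}(x_{k})$ contributes the factor $q$ through the one-dimensional representation $q$. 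Assembling these contributions yields the uniform scalar $qt$, independently of $k$. This is exactly where the choice of the \emph{dual} Burau representation is essential: for the undualised $Bur_{n+1,t}$ the vector $e_{1}$ fails to be an eigenvector (the block $\sigma_{k}\tau_{k}$ then produces an off-diagonal term), and carrying out this eigenvalue verification is the main computational obstacle of the proof.

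It then remains to read off the $\mathbf{wB}_{n}$-action on the $w_{k}$. For each generator, $id_{1}\ast\sigma_{i}=\sigma_{i+1}$ and $id_{1}\ast\tau_{i}=\tau_{i+1}$ involve only indices $\geq 2$, so under $\rho$ they fix $e_{1}$ up to the scalar $q$, respectively $1$; combined with the normalising factor $q^{-1}$ these scalars cancel, and the action reduces to $w_{k}\mapsto a_{n}(\lambda)(x_{k}-1)\otimes e_{1}$. Feeding in the formulas for $a_{n}(\sigma_{i})$ and $a_{n}(\tau_{i})$ on the augmentation ideal recorded in the proof of Theorem~\ref{thm:recbur}, and using $x_{j}\cdot e_{1}=qt\,e_{1}$ to evaluate the conjugate $x_{i+1}^{-1}x_{i}x_{i+1}$ appearing there, one obtains $\sigma_{i}\cdot w_{i}=w_{i+1}$, $\sigma_{i}\cdot w_{i+1}=qt\,w_{i}+(1-qt)w_{i+1}$, $\tau_{i}\cdot w_{i}=w_{i+1}$, $\tau_{i}\cdot w_{i+1}=w_{i}$, and $w_{k}\mapsto w_{k}$ in all remaining cases. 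In particular the span of the $w_{k}$ is closed under $\mathbf{wB}_{n}$; and since $\mathcal{I}_{\mathbf{F}_{n}}\otimes_{\mathbf{F}_{n}}V\cong V^{\oplus n}$ is free with the $w_{k}$ linearly independent, these are exactly the defining matrices of $Bur_{n,qt}$, which identifies the subrepresentation with $Bur_{n,qt}$ and completes the argument.
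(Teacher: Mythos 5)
Your proof is correct and follows essentially the same route as the paper: observe that $e_{1}$ is fixed (up to the scalar absorbed by the $q^{-1}$ twist) by $\rho(id_{1}\ast\lambda)$, that $qBur^{*}_{n+1,t}(\xi_{n,1}(x_{j}))$ acts on $e_{1}$ by the scalar $qt$, and then read off the Burau matrices at parameter $qt$ from the action of the Artin homomorphism on the free basis $\{x_{k}-1\}$ of $\mathcal{I}_{\mathbf{F}_{n}}$. Your explicit verification of the eigenvector claim via the conjugating permutation word, and the remark that it fails for the undualised Burau representation, are slightly more detailed than the paper's proof but not a different argument.
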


\begin{proof}
First of all, note that $qBur_{n+1,t}^{*}\left(\sigma_{i+1}\right)\left(e_{1}\right)=qBur_{n+1,t}^{*}\left(\tau_{i+1}\right)\left(e_{1}\right)=e_{1}$ for all $i\in\left\{ 1,\ldots,n\right\}$. Therefore the action of Artin homomorphism gives that $q^{-1}\mathbf{LM}_{1}\left(qBur_{n+1,t}^{*}\right)\left(\sigma_{i}\right)\left(\left(x_{k}-1\right)\otimes e_{1}\right)$ is equal to
$$\begin{cases}
\left(x_{i+1}-1\right)\otimes e_{1} & \textrm{if \ensuremath{j=i}}\\
\left(x_{i}-1\right)\otimes qBur_{n+1,t}^{*}\left(\xi_{n}\left(x_{i+1}\right)\right)e_{1}\\
+\left(x_{i+1}-1\right)\otimes qBur_{n+1,t}^{*}\left(\xi_{n}\left(1-x_{i+1}^{-1}x_{i}x_{i+1}\right)\right)e_{1} & \textrm{if \ensuremath{j=i+1}}\\
\left(x_{j}-1\right)\otimes e_{1} & \textrm{otherwise}
\end{cases}$$
and$$q^{-1}\mathbf{LM}_{1}\left(qBur_{n+1,t}^{*}\right)\left(\tau_{i}\right)\left(\left(x_{k}-1\right)\otimes e_{1}\right)=\begin{cases}
\left(x_{i+1}-1\right)\otimes e_{1} & \textrm{if \ensuremath{j=i}}\\
\left(x_{i}-1\right)\otimes e_{1} & \textrm{if \ensuremath{j=i+1}}\\
\left(x_{j}-1\right)\otimes e_{1} & \textrm{if \ensuremath{j\notin\left\{ i,i+1\right\} \textrm{.}}}
\end{cases}$$
The result thus follows from the fact that $qBur_{n+1,t}^{*}\left(\xi_{n}\left(x_{j}\right)\right)=qt$ for all $j\in\left\{ 1,\ldots,n\right\} $ and the use of the canonical isomorphism $\varLambda$ (see the proof of Theorem \ref{thm:recbur}).
\end{proof}

It remains to identify the quotient of $q^{-1}\mathbf{LM}_{1}\left(qBur_{n+1,t}^{*}\right)$ by the subrepresentation $Bur_{n,qt}$. For that purpose, let us first study the case of $n=3$. The matrix of the morphism $q^{-1}\mathbf{LM}_{1}\left(qBur^{*}_{4,t}\right)\left(\sigma_{1}\right)$ is
$$\left[\begin{array}{cccccccccccc}
0 & 0 & 0 & 0 & qt & q\left(1-t\right) & 0 & 0 & 0 & 0 & 0 & 0\\
0 & 0 & 0 & 0 & 0 & q\left(1-t\right) & qt & 0 & 0 & 0 & 0 & 0\\
0 & 0 & 0 & 0 & 0 & q & 0 & 0 & 0 & 0 & 0 & 0\\
0 & 0 & 0 & 0 & 0 & 0 & 0 & q & 0 & 0 & 0 & 0\\
1 & 0 & 0 & 0 & 1-qt & qt^{-1}\left(1-t-t^{2}+t^{3}\right) & qt\left(1-t\right) & 0 & 0 & 0 & 0 & 0\\
0 & 1-t & t & 0 & 0 & \left(1-q\right)\left(1-t\right) & t\left(1-q\right) & 0 & 0 & 0 & 0 & 0\\
0 & 1 & 0 & 0 & 0 & 1-q & 0 & 0 & 0 & 0 & 0 & 0\\
0 & 0 & 0 & 1 & 0 & 0 & 0 & 1-q & 0 & 0 & 0 & 0\\
0 & 0 & 0 & 0 & 0 & 0 & 0 & 0 & 1 & 0 & 0 & 0\\
0 & 0 & 0 & 0 & 0 & 0 & 0 & 0 & 0 & 1-t & t & 0\\
0 & 0 & 0 & 0 & 0 & 0 & 0 & 0 & 0 & 1 & 0 & 0\\
0 & 0 & 0 & 0 & 0 & 0 & 0 & 0 & 0 & 0 & 0 & 1
\end{array}\right]$$
and the one of $q^{-1}\mathbf{LM}_{1}\left(qBur^{*}_{4,t}\right)\left(\sigma_{2}\right)$ is
$$\left[\begin{array}{cccccccccccc}
1 & 0 & 0 & 0 & 0 & 0 & 0 & 0 & 0 & 0 & 0 & 0\\
0 & 1 & 0 & 0 & 0 & 0 & 0 & 0 & 0 & 0 & 0 & 0\\
0 & 0 & 1-t & t & 0 & 0 & 0 & 0 & 0 & 0 & 0 & 0\\
0 & 0 & 1 & 0 & 0 & 0 & 0 & 0 & 0 & 0 & 0 & 0\\
0 & 0 & 0 & 0 & 0 & 0 & 0 & 0 & qt & 0 & q\left(1-t\right) & 0\\
0 & 0 & 0 & 0 & 0 & 0 & 0 & 0 & 0 & q & 0 & 0\\
0 & 0 & 0 & 0 & 0 & 0 & 0 & 0 & 0 & 0 & q\left(1-t\right) & qt\\
0 & 0 & 0 & 0 & 0 & 0 & 0 & 0 & 0 & 0 & q & 0\\
0 & 0 & 0 & 0 & 1 & 0 & 0 & 0 & 1-qt & 0 & qt^{-1}\left(1-t-t^{2}+t^{3}\right) & qt\left(1-t\right)\\
0 & 0 & 0 & 0 & 0 & 1 & 0 & 0 & 0 & 1-q & 0 & 0\\
0 & 0 & 0 & 0 & 0 & 0 & 1-t & t & 0 & 0 & \left(1-q\right)\left(1-t\right) & t\left(1-q\right)\\
0 & 0 & 0 & 0 & 0 & 0 & 1 & 0 & 0 & 0 & 1-q & 0
\end{array}\right].$$
Those for $\tau_{1}$ and $\tau_{2}$ are the analogues with $t=q=1$. Then the quotient by the subspace $\left\{ e_{1},e_{5},e_{9}\right\} $ gives the matrices
$$\left[\begin{array}{ccccccccc}
0 & 0 & 0 & q\left(1-t\right) & qt & 0 & 0 & 0 & 0\\
0 & 0 & 0 & q & 0 & 0 & 0 & 0 & 0\\
0 & 0 & 0 & 0 & 0 & q & 0 & 0 & 0\\
1-t & t & 0 & \left(1-q\right)\left(1-t\right) & t\left(1-q\right) & 0 & 0 & 0 & 0\\
1 & 0 & 0 & 1-q & 0 & 0 & 0 & 0 & 0\\
0 & 0 & 1 & 0 & 0 & 1-q & 0 & 0 & 0\\
0 & 0 & 0 & 0 & 0 & 0 & 1-t & t & 0\\
0 & 0 & 0 & 0 & 0 & 0 & 1 & 0 & 0\\
0 & 0 & 0 & 0 & 0 & 0 & 0 & 0 & 1
\end{array}\right]$$
$$\left[\begin{array}{ccccccccc}
1 & 0 & 0 & 0 & 0 & 0 & 0 & 0 & 0\\
0 & 1-t & t & 0 & 0 & 0 & 0 & 0 & 0\\
0 & 1 & 0 & 0 & 0 & 0 & 0 & 0 & 0\\
0 & 0 & 0 & 0 & 0 & 0 & q & 0 & 0\\
0 & 0 & 0 & 0 & 0 & 0 & 0 & q\left(1-t\right) & qt\\
0 & 0 & 0 & 0 & 0 & 0 & 0 & q & 0\\
0 & 0 & 0 & 1 & 0 & 0 & 1-q & 0 & 0\\
0 & 0 & 0 & 0 & 1-t & t & 0 & \left(1-q\right)\left(1-t\right) & t\left(1-q\right)\\
0 & 0 & 0 & 0 & 1 & 0 & 0 & 1-q & 0
\end{array}\right].$$
Writing down the matrices of the tensor product of representations
$$Bur_{3,t}^{*}\underset{\mathbb{Z}\left[t^{\pm1},q^{\pm1}\right]}{\otimes}Bur_{3,q}$$
for the generators of the welded braid group, we then recognise the above matrices: we thus identify the quotient of the Long-Moody construction to a tensor product of two Burau representations.

The situation for any $n\geq 3$ works in the same way: indeed consecutive generators $\left(\sigma_{i},\tau_{i}\right)$ and $\left(\sigma_{i+1},\tau_{i+1}\right)$ in $\mathbf{wB}_{n}$ can be identified to $\mathbf{wB}_{3}$. Then the matrices for the quotient of $q^{-1}\mathbf{LM}_{1}\left(qBur^{*}_{n+1,t}\right)$ are similar to the above ones, except that there are more diagonal matrix blocks given by $Bur_{3,t}$. Hence we proved that the iteration of the Long-Moody construction does not construct any new representation of $\mathbf{wB}_{n}$; more precisely we have that:

\begin{thm}\label{thm:iter}
Applying the Long-Moody construction $\mathbf{LM}_{1}$ to the representation $Bur^{*}_{n+1,t}$ gives the short exact sequence of $\mathbf{wB}_{n}$-representations:
\begin{equation}\label{eq:sesiterateburau}
\xymatrix{0\ar@{->}[r] & Bur_{n,qt}\ar@{->}[r] & q^{-1}\mathbf{LM}_{1}\left(qBur^{*}_{n+1,t}\right)\ar@{->}[r] & Bur_{3,t}\underset{\mathbb{Z}\left[t^{\pm1},q^{\pm1}\right]}{\otimes}Bur_{3,q}^{*}\ar@{->}[r] & 0}.
\end{equation}
\end{thm}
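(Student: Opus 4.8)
The plan is to build the short exact sequence \eqref{eq:sesiterateburau} in two moves: first install the subrepresentation, then pin down the quotient by an explicit computation that reduces to the case $n=3$.

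The left-hand arrow is already furnished by Proposition \ref{subrepLM}: the submodule of $\mathcal{I}_{\mathbf{F}_{n}}\varotimes_{\mathbf{F}_{n}}\mathbb{Z}[t^{\pm1},q^{\pm1}]^{\oplus n}$ generated by $\{(x_{k}-1)\otimes e_{1}\}_{1\leq k\leq n}$ is $\mathbf{wB}_{n}$-stable and carries $Bur_{n,qt}$, so it remains only to recognise the cokernel. For this I would fix the basis $\{(x_{k}-1)\otimes e_{j}\}$ afforded by the freeness of $\mathcal{I}_{\mathbf{F}_{n}}$ and the isomorphism $\varLambda$ from the proof of Theorem \ref{thm:recbur}, and read off the generator matrices from the defining formula of $\mathbf{LM}_{1}$. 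The action on $(x_{k}-1)\otimes e_{j}$ is controlled by two pieces: the Artin homomorphism $a_{n}$ on the augmentation ideal (which reproduces Burau-type coefficients exactly as in Theorem \ref{thm:recbur}, now carrying the new variable $q$ supplied by the $q$-twist through $qBur^{*}_{n+1,t}\circ\xi_{n,1}$), and the restriction along $id_{1}*-$ of the dual Burau action on $\{e_{2},\dots,e_{n+1}\}$ (which, since $e_{1}$ is fixed, descends to a reduced version of the dual Burau and restricts to a second Burau factor of $\mathbf{wB}_{n}$ in the variable $t$). Quotienting by $\{(x_{k}-1)\otimes e_{1}\}$ deletes precisely the coordinates on which $Bur_{n,qt}$ lives.

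Concretely I would first dispatch the case $n=3$: write out the two $12\times12$ matrices of $q^{-1}\mathbf{LM}_{1}(qBur^{*}_{4,t})(\sigma_{1})$ and $(\sigma_{2})$ (with $\tau_{1},\tau_{2}$ their specialisations at $t=q=1$), strike out the rows and columns indexed by $(x_{k}-1)\otimes e_{1}$, and compare the resulting $9\times9$ matrices with the Kronecker products assembled from Propositions \ref{def:burau} and \ref{def:dualburau} for the tensor product appearing in \eqref{eq:sesiterateburau}. Reading each $9\times9$ matrix in $3\times3$ blocks, every block is a scalar multiple of one and the same $3\times3$ matrix, and the array of scalars is itself a Burau matrix; hence the matrices split visibly as Kronecker products of two Burau representations (up to the evident change of basis), which identifies the quotient. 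For general $n\geq3$ I would then exploit locality: any consecutive pair $(\sigma_{i},\tau_{i}),(\sigma_{i+1},\tau_{i+1})$ spans a copy of $\mathbf{wB}_{3}$ in $\mathbf{wB}_{n}$, and in the chosen basis the generator matrices are block-diagonal with a single nontrivial block equal to the $n=3$ computation and the remaining blocks the diagonal Burau contributions of the strands not involved; assembling these local identifications over all $i$ yields the quotient term of \eqref{eq:sesiterateburau} and closes the sequence.

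The main obstacle is the middle move: showing that the genuinely coupled $\mathbf{LM}_{1}$-action on $(x_{k}-1)\otimes e_{j}$ splits, in a fixed basis ordering, as a clean Kronecker product rather than merely as an iterated extension. The delicate point is that $qBur^{*}_{n+1,t}(\xi_{n,1}(x_{j}))$ is scalar only on the $e_{1}$-line and is a nontrivial matrix on the remaining $e_{j}$, so the separation of the Artin (variable-$q$) direction from the dual-Burau (variable-$t$) direction becomes visible only after passing to the quotient and ordering the basis so that the two directions decouple. Verifying that the resulting $9\times9$ blocks are exactly the advertised Kronecker products is the crux, and is precisely what the explicit $n=3$ matrix check settles.
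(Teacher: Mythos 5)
Your proposal follows essentially the same route as the paper: invoke Proposition \ref{subrepLM} for the subrepresentation $Bur_{n,qt}$, compute the explicit $12\times 12$ matrices for $n=3$, pass to the quotient by the $e_{1}$-coordinates and recognise the resulting $9\times 9$ matrices as a Kronecker product of two Burau representations, then extend to general $n$ by the locality of consecutive generator pairs spanning copies of $\mathbf{wB}_{3}$. This matches the paper's argument, including its identification of the explicit $n=3$ block computation as the crux.
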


\paragraph{Cases of the dual and reduced versions.}
The main conclusion on the iteration of the Long-Moody construction of Theorem \ref{thm:iter} remains true if we apply $\mathbf{LM}_{1}$ to the other versions $Bur_{n+1,t}$ and $\overline{Bur}_{n+1,t}$ of the Burau representation.

For the dual Burau representation $Bur_{n+1,t}$, it is defined at the level of the matrices by the transpose of the inverse of $Bur^{*}_{n+1,t}$. Therefore the $\left(n+1\right)\times\left(n+1\right)$-blocks of $q^{-1}\mathbf{LM}_{1}\left(qBur_{n+1,t}\right)$ are the transpose of those of $q^{-1}\mathbf{LM}_{1}\left(qBur^{*}_{n+1,t}\right)$. Recall that $\left\{ e_{i}\right\} _{i\in\left\{ 1,\ldots,n+1\right\} }$ denotes the basis for the matrices of the representation $Bur^{*}_{n+1,t}$. Then the analogue of Proposition \ref{subrepLM} shows that the submodule of $\mathcal{I}_{\mathbf{F}_{n}}\underset{\mathbf{F}_{n}}{\varotimes}\mathbb{Z}\left[t^{\pm1},q^{\pm1}\right]^{\oplus n}$ generated by the elements $\left\{ \left(x_{k}-1\right)\otimes e_{j}\right\} _{\left(k,j\right)\in\left\{ 1,\ldots,n\right\} \times\left\{ 2,\ldots,n\right\} }$ is closed under the action of $\mathbf{wB}_{n}$ by $q^{-1}\mathbf{LM}_{1}\left(qBur_{n+1,t}\right)$. Moreover repeating mutatis mutandis the proof of Theorem \ref{thm:iter}, we prove that there is a short exact sequence of $\mathbf{wB}_{n}$-representations:
$$\xymatrix{0\ar@{->}[r] & Bur_{3,t}^{*}\underset{\mathbb{Z}\left[t^{\pm1},q^{\pm1}\right]}{\otimes}Bur_{3,q}\ar@{->}[r] & q^{-1}\mathbf{LM}_{1}\left(qBur_{n+1,t}\right)\ar@{->}[r] & Bur_{n,qt}^{*}\ar@{->}[r] & 0}.$$

For the reduced Burau representation $\overline{Bur}_{n+1,t}$, note that we have for all $i\in\left\{ 1,\ldots,n-1\right\} $
$$r_{n+1}\circ Bur^{*}_{n+1,t}\left(\sigma_{i}\right)\circ r_{n+1}^{-1}=\left[\begin{array}{cc}
\overline{Bur}^{*}_{n+1,t}\left(\sigma_{i}\right) & 0\\
L_{i} & 1
\end{array}\right]$$
where $L_{i}=\left[\begin{array}{ccccc}0 & \cdots & 0 & \delta_{i,n} & 1\end{array}\right]$ and $r_{n+1}$ is the $n\times n$-matrix with coefficients $r_{i,j}=1$ if $j\leq i$ and $r_{i,j}=0$. Therefore, along the injection $id_{1}*-:\mathbf{wB}_{n}\hookrightarrow\mathbf{wB}_{n+1}$, the short exact sequence \eqref{eq:dualsesburau} splits as a short exact sequence of $\mathbf{wB}_{n}$-representations. Then it follows from the freeness (and a fortiori flatness) of the augmentation ideal $\mathcal{I}_{\mathbf{F}_{n}}$ as a $\mathbb{Z}\left[\mathbf{F}_{n}\right]$-module that we have an isomorphism of $\mathbf{wB}_{n}$-representations:
$$\ensuremath{q^{-1}\mathbf{LM}_{1}\left(qBur_{n+1,t}^{*}\right)}\cong q^{-1}\mathbf{LM}_{1}\left(q\overline{Bur}_{n+1,t}^{*}\right)\oplus q^{-1}\mathbf{LM}_{1}\left(q\mathbb{Z}\left[t^{\pm1}\right]\right).$$
The representation $q^{-1}\mathbf{LM}_{1}\left(q\overline{Bur}_{n+1,t}^{*}\right)$ is thus determined by the short exact sequence \eqref{eq:sesiterateburau}.

\subsubsection{On the impossibility to recover the Tong-Yang-Ma representation}\label{sec:notrecoverTYM}

We detail in \S\ref{sec:theoretical} that a Long-Moody construction is equivalent to the setting of two parameters $a_{n}:\mathbf{wB}_{n}\rightarrow Aut\left(\mathbf{F}_{n}\right)$ and $\xi_{n}:\mathbf{F}_{n}\rightarrow\mathbf{wB}_{n+1}$ satisfying the compatibility condition \eqref{eq:diagram}. So far, we have used the Artin homomorphism for the action $a_{n}$ and this parameter determines the shape of the obtained matrices (see \S\ref{sec:recoverBurau} and \S\ref{sec:iterationBurau}).

In \cite[Section 2.3.2]{soulie1}, the Tong-Yang-Ma representation for braid groups is recovered by playing on the choice of this morphism. However, we prove that it is not the case for welded braid groups in the following result. In particular, we call the representation which space is $\mathbb{Z}\left[t^{\pm1}\right]^{\oplus n}$ and on which $\mathbf{wB}_{n}$ acts trivially the \emph{trivial $n$-dimensional representation} of $\mathbf{wB}_{n}$. 

\begin{thm}\label{impossiblerecoverTYM}
Let $\mathbf{LM}$ be the Long-Moody construction associated with a Wada representation for a welded braid groups and an abstract compatible $\xi_{n}$. Then $t^{-1}\mathbf{LM}\left(t\right)$ is equivalent either to the Burau representation $Bur$ (or its dual $Bur^{*}$), or to the permutation representation extended to $\mathbf{wB}_{n}$, or to the trivial $n$-dimensional representation of $\mathbf{wB}_{n}$.

In particular, the Tong-Yang-Ma representation (or its dual) cannot be recovered by any Long-Moody construction for welded braid groups.
\end{thm}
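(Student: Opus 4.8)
The plan is to reduce the statement to Proposition~\ref{def:unique} and then to discard the Tong-Yang-Ma alternative by exhibiting a trivial one-dimensional quotient of the constructed representation.

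First I would show that $\eta_{n}:=t^{-1}\mathbf{LM}\left(t\right)$ satisfies the hypotheses of Proposition~\ref{def:unique}. Write $\alpha_{n}$ for the chosen Wada representation. Since any Wada representation modifies only the pair $x_{i},x_{i+1}$ through fixed words $u\left(x_{i},x_{i+1}\right)$ and $v\left(x_{i},x_{i+1}\right)$, the induced action of $\sigma_{i}$ (and of $\tau_{i}$) on the augmentation ideal $\mathcal{I}_{\mathbf{F}_{n}}$ fixes each $x_{j}-1$ with $j\notin\left\{i,i+1\right\}$ and sends $x_{i}-1,x_{i+1}-1$ to combinations of $x_{i}-1$ and $x_{i+1}-1$ (Fox calculus, as in the proof of Theorem~\ref{thm:recbur}). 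Evaluating the compatibility condition~\eqref{eq:cond1} through the one-dimensional representation $t$ gives $t\left(\xi_{n}\left(x\right)\right)=t\left(\xi_{n}\left(\alpha_{n}\left(\lambda\right)\left(x\right)\right)\right)$, so $t\circ\xi_{n}$ is $\alpha_{n}$-invariant; as the generators $x_{j}$ all lie in a single $\alpha_{n}$-orbit, the scalars $t\left(\xi_{n}\left(x_{j}\right)\right)$ are equal to a common unit $s\in\mathbb{Z}\left[t^{\pm1}\right]^{\times}$. Through the isomorphism $\varLambda$ of the proof of Theorem~\ref{thm:recbur} this forces each $\eta_{n}\left(\sigma_{i}\right)$ and $\eta_{n}\left(\tau_{i}\right)$ into the shape $Id_{i-1}\oplus M\oplus Id_{n-i-1}$ with a $2\times2$ block independent of $i$. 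Hence the square of Proposition~\ref{def:unique} commutes and $\eta_{n}$ is equivalent or dual to the trivial, the Burau, the Tong-Yang-Ma, or the $t=1$ (permutation) Burau representation.

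The crux is to remove the Tong-Yang-Ma alternative, and the key point is that $\eta_{n}$ always surjects onto the trivial representation. The relevant Wada representations send each generator of $\mathbf{F}_{n}$ to a conjugate of a generator --- this is exactly the defining feature of the image of $\mathbf{wB}_{n}$ in $\Aut\left(\mathbf{F}_{n}\right)$ recalled in~\S\ref{Artin homomorphism} --- so $u$ and $v$ have total exponent sum one. A direct Fox-calculus computation then shows that both columns of the $2\times2$ block sum to $1$; the identity blocks have unit column sums as well, so the row vector $\left(1,\ldots,1\right)$ is fixed by $\eta_{n}\left(\sigma_{i}\right)$ and $\eta_{n}\left(\tau_{i}\right)$ for every $i$. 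It is thus a nonzero $\mathbf{wB}_{n}$-invariant functional and defines a surjection of $\eta_{n}$ onto the trivial one-dimensional representation. Conceptually, this functional is the $\mathbf{wB}_{n}$-equivariant map obtained by applying $-\otimes_{\mathbf{F}_{n}}V$ to the short exact sequence $0\to\mathcal{I}_{\mathbf{F}_{n}}\to\mathbb{Z}\left[\mathbf{F}_{n}\right]\to\mathbb{Z}\to0$, its equivariance being precisely the identity~\eqref{eq:cond1} already used to prove that $\mathbf{LM}\left(\rho\right)$ is well defined.

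Finally I would check that neither $TYM$ nor $TYM^{*}$ admits a trivial quotient. A short computation shows that a row vector $w$ fixed by all $TYM\left(\tau_{i}\right)$ must have all entries equal, and that $w\,TYM\left(\sigma_{i}\right)=w$ then forces $\left(t-1\right)w=0$, hence $w=0$; the analogous computation on column vectors shows $TYM$ has no trivial subrepresentation, so $TYM^{*}$ has no trivial quotient either. Since possessing a trivial quotient is preserved by equivalence and is interchanged with possessing a trivial subrepresentation under duality, the cases ``equivalent or dual to $TYM$'' are impossible, leaving exactly the trivial, permutation, $Bur$ and $Bur^{*}$ possibilities. I expect the main obstacle to lie in making the first two paragraphs uniform: establishing the invariant functional for an \emph{arbitrary} compatible $\xi_{n}$ and Wada datum (not only $\xi_{n,1}$ and the Artin homomorphism), and treating the degenerate case $s=1$, where $\eta_{n}$ has constant coefficients and is separated from the genuine Tong-Yang-Ma representation through the determinant $\det\eta_{n}\left(\sigma_{1}\right)\in\mathbb{Z}$ versus $\det TYM\left(\sigma_{1}\right)=-t$. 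By comparison, the reduction to Proposition~\ref{def:unique} and the final eigenvector computation are routine.
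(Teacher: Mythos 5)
Your overall strategy (reduce to Proposition~\ref{def:unique} via the local block form, then kill the Tong-Yang-Ma alternative by exhibiting a trivial quotient) is a genuinely different route from the paper's, but it has a gap at exactly the point where the theorem has content. The claim that ``the relevant Wada representations send each generator of $\mathbf{F}_{n}$ to a conjugate of a generator, so $u$ and $v$ have total exponent sum one'' is false: of the four Wada types that extend to $\mathbf{wB}_{n}$ (Types 2, 3, 4, 5), Types 2 and 3 have $u(x_{i},x_{i+1})=x_{i+1}^{-1}$, of exponent sum $-1$ (and Type 5 has $v=x_{i+1}x_{i}^{-1}x_{i+1}$, which is not a conjugate of a generator either --- the ``conjugates of generators'' characterisation applies to the image of the Artin homomorphism, not to arbitrary Wada representations). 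By the Fox fundamental identity, the corresponding column of the $2\times2$ block then sums to $\bigl(s^{-1}-1\bigr)/\left(s-1\right)=-s^{-1}$ rather than $1$, so the all-ones functional is \emph{not} invariant. This failure is not a boundary case: Type~2 is precisely the Wada representation that produces $TYM$ for ordinary braid groups, i.e.\ the only case in which the Tong-Yang-Ma alternative of Proposition~\ref{def:unique} is actually threatened, and since $TYM$ admits no trivial quotient (as you verify yourself in your last paragraph), no invariant functional of the kind you propose can exist there. So your exclusion mechanism works only for Types 4 and 5, where the outcome is Burau and nothing needs excluding.

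What actually rules out $TYM$ in the Type~2 case --- and what is the heart of the paper's proof --- is a group-theoretic constraint that you have relegated to a ``degenerate case $s=1$ to be treated'': applying the compatibility condition~\eqref{eq:cond1} to both $\sigma_{2}$ and $\tau_{2}$ for the Type~2 action yields $\xi_{n}\left(x_{2}\right)=\tau_{2}\xi_{n}\left(x_{1}\right)\tau_{2}=\sigma_{2}^{-1}\xi_{n}\left(x_{1}\right)\sigma_{2}$ together with $\xi_{n}\left(x_{2}^{-1}\right)=\sigma_{2}\xi_{n}\left(x_{1}\right)\sigma_{2}^{-1}$, forcing $\xi_{n}\left(x_{1}\right)$ to be conjugate to $\xi_{n}\left(x_{1}^{-1}\right)$ in $\mathbf{wB}_{n+1}$; hence $\left(t\circ\xi_{n}\right)^{2}=1$ and, since $-1$ is not in the image of $t$, the scalar $s$ is forced to equal $1$ and the matrices become constant. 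One is not merely ``treating'' $s=1$; one must \emph{prove} that the mixed relations of $\mathbf{wB}_{n+1}$ force it, and this step is absent from your proposal. (Your first paragraph comes close: tracking the full $\alpha_{n}$-orbit, including the relation $s=s^{-1}$ coming from the $\sigma_{i}$-action in Types 2 and 3 and not only the equality coming from the $\tau_{i}$-action, would give you this for free --- but as written you only extract that the scalars are equal.) Once $s=1$ is established, your determinant comparison does finish that case. Finally, note that the ``in particular'' clause of the statement also requires the dimension count and matrix-shape argument showing that \emph{any} Long-Moody construction recovering $TYM$ must arise from a Wada parameter applied to a one-dimensional input; your proposal does not address this.
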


\begin{proof}
We recall from \S\ref{sec:Wada} that we can only consider four Wada representations (Types 2, 3, 4 and 5). First of all, restricting to $\mathbf{B}_{n}$, \cite[Section 2.3.2]{soulie1} automatically implies that:
\begin{itemize}
\item with the Type $3$ Wada representation, $t^{-1}\mathbf{LM}\left(t\right)$ is equivalent to the permutation representation extended to $\mathbf{wB}_{n}$;
\item with the Type $4$ or $5$ Wada representation, $t^{-1}\mathbf{LM}\left(t\right)$ is equivalent the Burau representation.
\end{itemize}
\cite[Section 2.3.2]{soulie1} uses the Type $2$ Wada representation to obtain the representation $TYM$ for braid groups. Nevertheless, with the extension to $\mathbf{wB}_{n}$, it follows from the compatibility condition \eqref{eq:cond1} that $\tau_{2}\xi_{n}\left(x_{1}\right)\tau_{2}=\xi_{n}\left(x_{2}\right)=\sigma_{2}^{-1}\xi_{n}\left(x_{1}\right)\sigma_{2}$ and that $\xi_{n}\left(x_{2}^{-1}\right)=\sigma_{2}\xi_{n}\left(x_{1}\right)\sigma_{2}^{-1}$. We deduce that $\xi_{n}\left(x_{2}^{-1}\right)=\sigma_{2}^{-2}\xi_{n}\left(x_{1}\right)\sigma_{2}^{2}$ and that $\xi_{n}\left(x_{1}\right)=\left(\tau_{2}\sigma_{2}\right)^{-1}\xi_{n}\left(x_{1}\right)\tau_{2}\sigma_{2}$. Hence we obtain that:$$\xi_{n}\left(x_{1}\right)=\sigma_{2}^{-1}\tau_{2}^{2}\sigma_{2}^{-1}\xi_{n}\left(x_{1}\right)\sigma_{2}\tau_{2}^{2}\sigma_{2}=\xi_{n}\left(x_{1}^{-1}\right).$$
Then $\left(t\circ\xi_{n}\right)^{2}=1$ since $t$ and $\xi_{n}$ are morphisms. The only order $2$ elements of $\mathbb{Z}\left[t^{\pm1}\right]$ are $1$ and $-1$ and it follows from the definition of $t$ that $-1$ is not in its image. A fortiori the composite $t\circ\xi_{n}$ is the trivial morphism $\mathbf{F}_{n}\rightarrow\mathbf{wB}_{n+1}$. Then it follows from a straightforward computation of the matrices that $t^{-1}\mathbf{LM}\left(t\right)$ is equivalent to the permutation representation extended to $\mathbf{wB}_{n}$.

Note that a Long-Moody construction multiplies by $n$ the dimension of an input representation. Therefore the only way to construct the Tong-Yang-Ma representation would be to start from a one-dimensional representation. Also, the form of the matrix 
$$Id_{i-1}\oplus\left[\begin{array}{cc}
0 & 1\\
t & 0
\end{array}\right]\oplus Id_{n-i-1}
$$
for each braid generator $\sigma_{i}$ implies that the parameter $\alpha_{n}$ of the Long-Moody construction which would recover the Tong-Yang-Ma representation has to be a Wada representation.
Therefore the above study of $t^{-1}\mathbf{LM}\left(t\right)$ proves that no Long-Moody construction can recover the Tong-Yang-Ma representation.
\end{proof}

Therefore, Artin and, more generally, Wada representations do not seem to be a useful tool to obtain interesting linear representations for welded braid groups,
at least in the framework of Long-Moody procedure; a possible lead should be to consider other free groups embedded in $\mathbf{wB}_{n}$
or other actions. For instance, recently in \cite{DFM} was constructed a lift of Artin representation to an action at the $\pi_2$-level (remind that $B^3$ less $n$ trivial circles is not aspherical) and it seems interesting to adapt Long-Moody procedure in this framework. It seems also clear that a deeper understanding of $\mathbf{wB}_{n}$ and of its subgroup $\mathbf{D}_{n-1}$ in terms of \emph{motion groups} of circles \cite{damiani2017journey} could provide new perspectives. On another hand, an homological approaches to construct linear representations of welded braid groups is set in \cite{souliepalmer}: the connection with the representations of the present paper would deserve to be explored.


\bibliographystyle{plain}

\bibliography{bibliographie}


\end{document}